\date{\today}
\title[Fold singularities on spacelike CMC surfaces]{%
Fold singularities on spacelike CMC surfaces in Lorentz-Minkowski space}
\author[A.~Honda]{Atsufumi Honda}
\address{%
   National Institute of Technology, Miyakonojo College, 
   Yoshio, Miyakonojo 885-8567, Japan.
}
\email{atsufumi@cc.miyakonojo-nct.ac.jp}
\author[M.~Koiso]{Miyuki Koiso}
\address{%
   Institute of Mathematics for Industry, Kyushu University, 
   744, Motooka, Nishi-ku, Fukuoka 819-0395, Japan.
}
\email{koiso@math.kyushu-u.ac.jp}
\author[Saji~K.]{Kentaro Saji}
\address{%
   Department of Mathematics, Faculty of Science, Kobe University, 
   Rokko, Kobe 657-8501, Japan.
}
\email{saji@math.kobe-u.ac.jp}
\subjclass[2010]{Primary 53A10; Secondary 53A35, 53C50.}
\keywords{Spacelike CMC surface, constant mean curvature, fold, (2,5)-cuspidal edge.}
\thanks{(Corresponding author) A.\ Honda;
{\it E-mail address}:
{\tt atsufumi@cc.miyakonojo-nct.ac.jp.}
{\it Address}:
National Institute of Technology, Miyakonojo College,  
Yoshio, Miyakonojo 885-8567, Japan.
{\it Current address}:
Vienna University of Technology,
Wiedner Hauptstra{\ss}e 8-10/104,
A-1040 Vienna, Austria.
}
\theoremstyle{plain}
 \newtheorem{theorem}{Theorem}[section]
 \newtheorem{step}{Step}
 \newtheorem{fact}[theorem]{Fact}
 \newtheorem*{fact*}{Fact}
 \newtheorem{lemma}[theorem]{Lemma}
 \theoremstyle{remark}
 \newtheorem{definition}[theorem]{Definition}
 \newtheorem{remark}[theorem]{Remark}
 \newtheorem*{acknowledgements}{Acknowledgements}
\numberwithin{equation}{section}
\newcommand{\R}{\boldsymbol{R}}
\newcommand{\C}{\boldsymbol{C}}
\newcommand{\D}{\boldsymbol{D}}
\newcommand{\inner}[2]{\left\langle{#1},{#2}\right\rangle}
\newcommand{\vect}[1]{\boldsymbol{#1}}
\renewcommand{\Re}{\operatorname{Re}}
\renewcommand{\Im}{\operatorname{Im}}
\begin{document}
\begin{abstract}
Fold singular points play important roles in the theory of maximal surfaces.
For example, if a maximal surface admits fold singular points,
it can be extended to a timelike minimal surface analytically.
Moreover, there is a duality between conelike singular points and folds.
In this paper, we investigate fold singular points
on spacelike surfaces with non-zero constant mean curvature
(spacelike CMC surfaces).
We prove that spacelike CMC surfaces do not admit fold singular points.
Moreover, we show that 
the singular point set of any conjugate CMC surface 
of a spacelike Delaunay surface 
with conelike singular points
consists of $(2,5)$-cuspidal edges.
\end{abstract}
\maketitle

\section{Introduction}

An immersed surface in the Lorentz-Minkowski $3$-space $L^3$
is called of {\it zero mean curvature\/}
if it is locally a graph $x_0=f(x_1,\,x_2)$ satisfying
\[
  (1-f_{x_2}^2)f_{x_1 x_1} + 2 f_{x_1}f_{x_2}f_{x_1 x_2} + (1-f_{x_1}^2)f_{x_2 x_2}=0
\]
or a plane parallel to $x_0$-axis,
where we regard $L^3$ as an affine space 
$\R^3=\{(x_0,x_1,x_2)\}$ with the Lorentz metric of signature $(-,+,+)$,
and denote $f_{x_1}=\partial f/\partial x_{1}$, and so on.
At the point satisfying $1-f_{x_1}^2-f_{x_2}^2>0$ 
(resp.\ $1-f_{x_1}^2-f_{x_2}^2<0$),
the zero mean curvature surface is spacelike maximal (resp.\ timelike minimal). 
Although any complete maximal surface in $L^3$ is a spacelike plane \cite{Calabi},
there are nontrivial zero mean curvature surfaces of mixed type 
(\cite{Kobayashi_Tokyo}, \cite{Gu1985}, 
\cite{TkachevSergienko}, \cite{Klyachin2003}, \cite{KKSY}, 
\cite{FKKRSTUYY}, \cite{FRUYY2014}, 
\cite{FKKRSUYY_Osaka}, \cite{FKKRSUYY_Okayama}, 
\cite{FKKRUY_JM} and \cite{FKKRUY_entire}),
where a (connected) surface in $L^3$ is called \emph{of mixed type} if
its spacelike and timelike parts are both non-empty.

\begin{figure}[htb]
\begin{center}
 \begin{tabular}{{c@{\hspace{20mm}}c}}
  \resizebox{5cm}{!}{\includegraphics{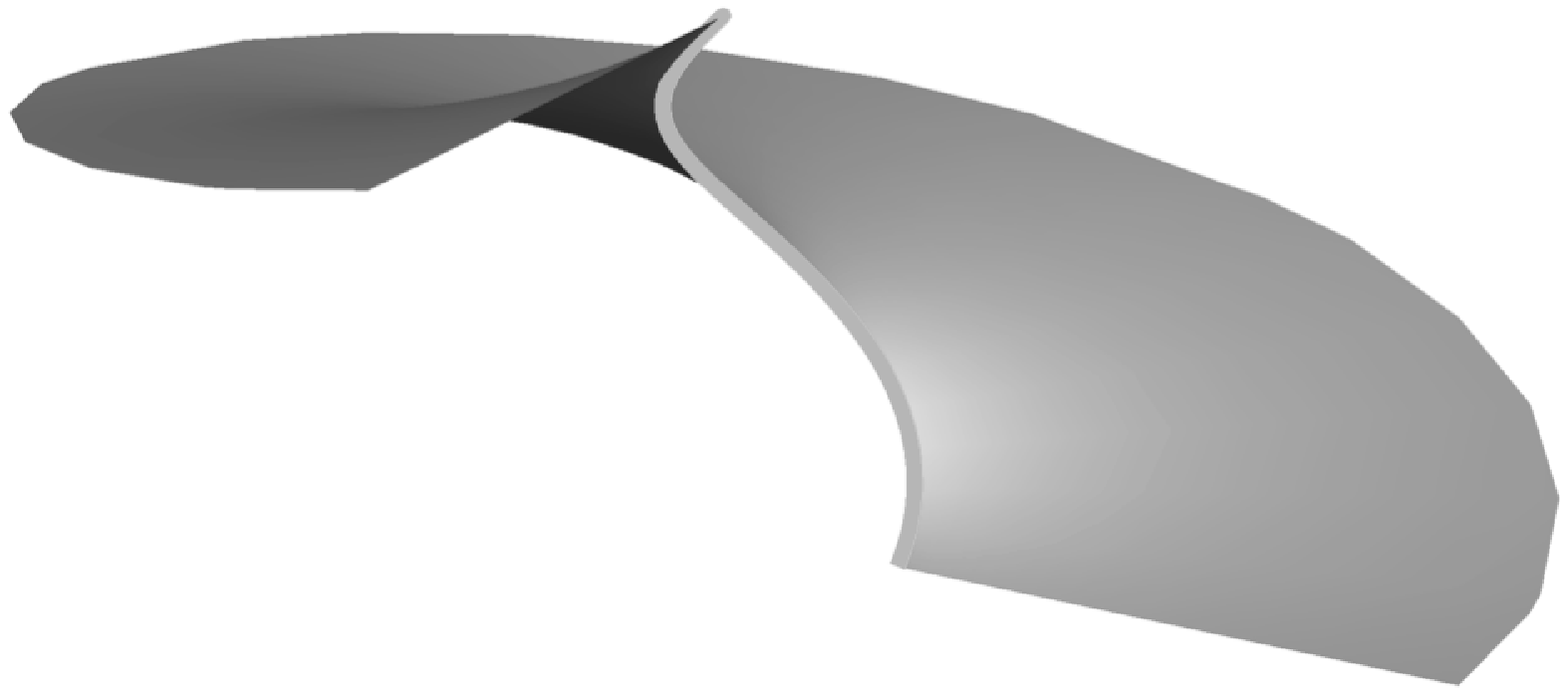}}&
  \resizebox{5cm}{!}{\includegraphics{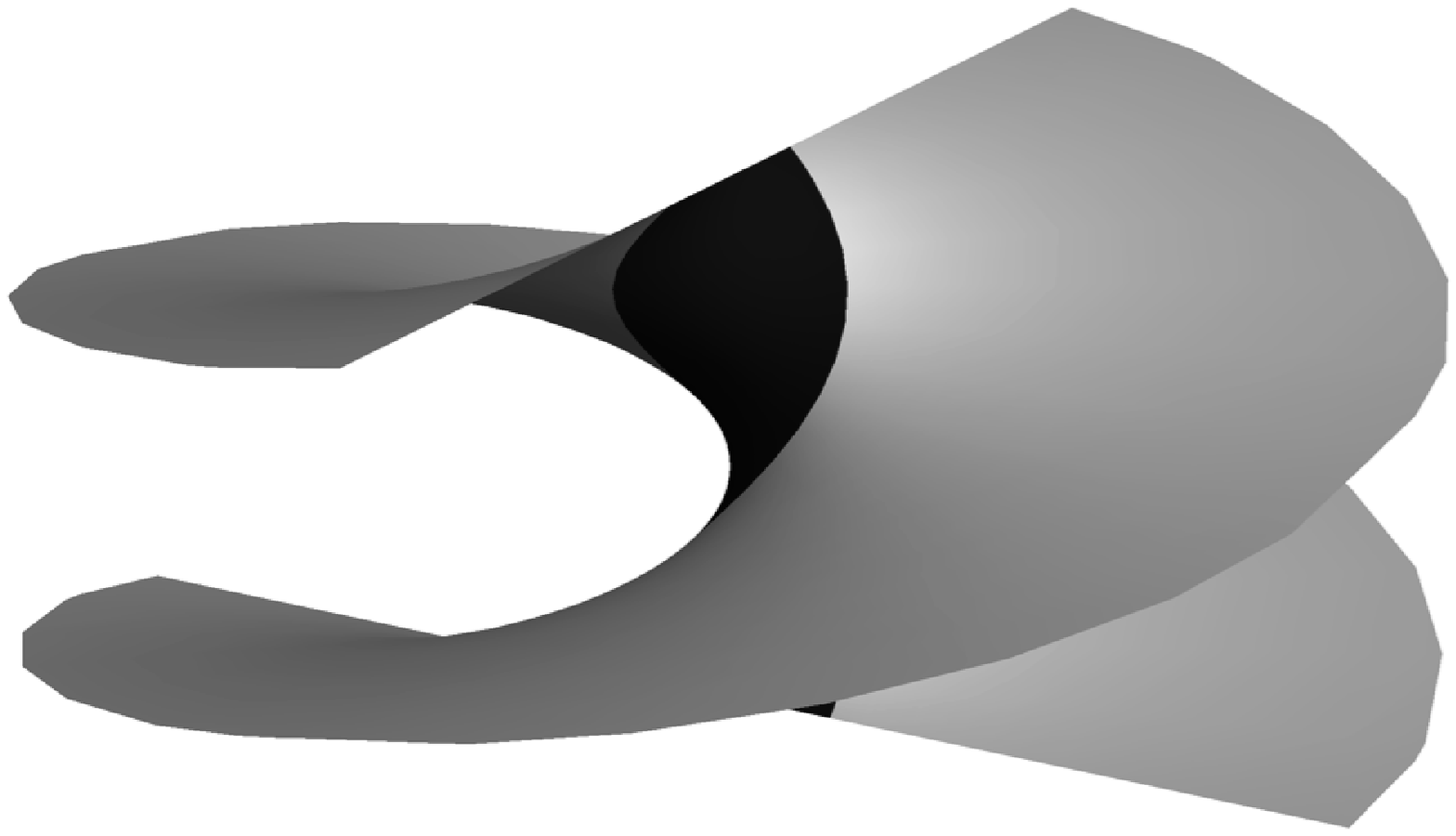}} \\
  {\footnotesize  Maximal helicoid.} &
  {\footnotesize  Zero mean curvature helicoid.}
 \end{tabular}
 \caption{The left figure is the spacelike maximal helicoid 
 whose singular point set consists of fold singular points.
 The right figure is the zero mean curvature helicoid of mixed type 
 which is an extension of the left maximal helicoid.
 }
\label{fig:extension}
\end{center}
\end{figure}

According to Gu \cite{Gu1985, Gu1987, Gu1990},
Klyachin \cite{Klyachin2003} and Kim-Koh-Shin-Yang \cite{KKSY},
on a neighborhood of a non-degenerate type-changing point 
of a zero mean curvature surface of mixed type,
its spacelike part is a {\it maxface\/} with {\it fold\/} singular points 
\cite{FKKRSUYY_Okayama} (cf.\ Definition \ref{def:fold}),
where a `maxface' is a maximal surface with admissible singular points
introduced by Umehara-Yamada \cite{UY_maxface}.
Conversely, a maxface with fold singular points can be extended analytically
to a zero mean curvature surface which changes causal type.
For the definition of non-degenerate type-changing points,
see \cite{Gu1987, Gu1990} and \cite{FKKRSUYY_Okayama}.
Roughly speaking, there is a one-to-one correspondence
between fold singular points and zero mean curvature surfaces of mixed type.

In this paper, we consider fold singular points
on non-maximal spacelike surfaces of constant mean curvature
(i.e., spacelike CMC surfaces).
Spacelike CMC surfaces have a significant importance in physics \cite{MarsdenTipler}.
Umeda \cite{Umeda} introduced a class of 
spacelike CMC surfaces with admissible singularities 
called `{\it generalized spacelike CMC surfaces\/}'
(cf.\ Definition \ref{def:geneCMC}),
and investigated their singularities.
On the other hand, Brander defined and investigated 
spacelike CMC surfaces with singularities
using the DPW method \cite{BranderCMC}.
Although Umeda \cite{Umeda} and Brander \cite{BranderCMC} 
exhibited various examples 
of spacelike CMC surfaces with singularities
(such as cuspidal edges, swallowtails, cuspidal cross caps and
conelike singular points),
spacelike CMC surfaces with {\it fold\/} singular points were not known.
Here, we show the following:

\begin{theorem}\label{thm:fold}
Generalized spacelike CMC surfaces 
do not admit any fold singular points.
\end{theorem}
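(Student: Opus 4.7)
The plan is to assume by contradiction that $f:U\to L^3$ is a generalized spacelike CMC surface with constant mean curvature $H\neq 0$ admitting a fold singular point at $p$, and to derive an algebraic obstruction to the CMC equation in local coordinates adapted to the fold.

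Using Definition \ref{def:fold}, I would first choose local coordinates $(u,v)$ centered at $p$ so that the singular set is $\{v=0\}$ and the fold symmetry $f(u,v)=f(u,-v)$ holds; equivalently $f(u,v)=g(u,v^2)$ for a smooth map $g$ into $L^3$. By continuity of the smooth unit normal $\nu$ the same involution forces $\nu(u,v)=N(u,v^2)$. The frontal orthogonality relations $\langle f_u,\nu\rangle=\langle f_v,\nu\rangle=0$, together with $f_v=2v\,g_w$, give after canceling $2v$ the identities $\langle g_u,N\rangle=\langle g_w,N\rangle=0$ throughout. From here the coefficients of the first and second fundamental forms carry explicit powers of $v$: one finds $F=2v\,\langle g_u,g_w\rangle$, $G=4v^2\,\langle g_w,g_w\rangle$, and, using $\langle g_w,N\rangle\equiv 0$, that $g_{II}=4v^2\,\langle g_{ww},N\rangle$ while $f_{II}$ vanishes linearly in $v$. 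A common factor of $v^2$ therefore cancels between the numerator $eG-2f_{II}F+g_{II}E$ and the denominator $EG-F^2$ of the mean curvature formula, yielding a well-defined limit for $H$ at $v=0$.

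The main obstacle is extracting the contradiction from this boundary value. I expect the definition of fold to include the nondegeneracy condition that $\nu(u,0)=N(u,0)$ is null rather than timelike (reflecting that the Gauss map actually reaches the null locus), and combined with $\langle g_w,N\rangle\equiv 0$ this forces $g_w(u,0)$ to be proportional to $N(u,0)$, so that $\langle g_w(u,0),g_w(u,0)\rangle=0$. Substituting these null conditions into the reduced mean curvature expression and expanding the CMC equation to the next order in $v^2$ should produce an identity of the form $(\text{nonzero function along the singular curve})\cdot H=0$, forcing $H=0$ and contradicting the hypothesis $H\neq 0$.

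As an alternative route, one can exploit the real-analyticity of generalized spacelike CMC surfaces to extend $g(u,w)$ smoothly through $w<0$ across the fold. On the $w<0$ side the induced metric becomes timelike, and the mean curvature equation differs from the spacelike form by a sign; analytic continuation of the CMC relation across the null locus $w=0$ then yields $H=-H$, hence $H=0$. In either approach the delicate step is justifying that the structural consequences of the fold definition, namely the null normal and the matching across the fold, are compatible with Umeda's framework of generalized spacelike CMC surfaces, and reading off the correct leading-order behavior at the singular curve.
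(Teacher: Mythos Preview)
Your setup in fold-adapted coordinates is fine, and it is true (though you only ``expect'' it) that for a generalized spacelike CMC surface the limiting normal direction along the singular curve is null: this follows from Fact~\ref{fact:geneCMC-sing}, since $|g(p)|=1$ at rank-one singular points. But the argument does not close. The specific claim that $\langle g_w,N\rangle=0$ together with $N$ null forces $g_w(u,0)$ to be proportional to $N(u,0)$ is false: the Lorentzian orthogonal complement $N^\perp$ of a null vector is a two-dimensional degenerate plane containing $N$, and both $g_u$ and $g_w$ lie in it, so there is no reason the unique null line in that plane should be $g_w$ rather than $g_u$ or some combination. More importantly, the promised contradiction---an identity $(\text{nonzero})\cdot H=0$ from the leading-order expansion---is never computed, only asserted. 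If you carry it through, what you actually obtain is that the immersion $g(u,w)$ has constant Lorentzian mean curvature $H$ for $w>0$ with its tangent plane becoming degenerate at $w=0$; this is precisely the hypothesis of a mixed-type CMC surface, whose nonexistence is the separate (and harder) result of \cite{HKKUY}, not something that drops out of a local Taylor expansion. Your analytic-continuation route has the same issue, compounded by the fact that Whitney's lemma (Fact~\ref{fact:whitney}) gives only smoothness of $g$ in $w=v^2$, not real-analyticity, since the fold coordinate is not the conformal coordinate in which the CMC equation is elliptic.

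The paper's argument sidesteps all of this with a short topological obstruction. From the fold symmetry $X(u,v)=X(u,-v)$ the induced metric is invariant under $v\mapsto -v$, hence so is $\Delta_{ds^2}X$; since $\nu=-\Delta_{ds^2}X$ on the regular set (after normalizing $H=1/2$), one gets $\nu(u_0,v_0)=\nu(u_0,-v_0)$ for $v_0\neq 0$. On the other hand, Lemma~\ref{lem:nondeg-g} says the Gauss map $g$ is a local diffeomorphism near a non-degenerate singular point, and since $|g|=1$ exactly on $\{v=0\}$, one has $|g|>1$ on one side and $|g|<1$ on the other. That forces $\nu(u_0,v_0)\in H^2_+$ and $\nu(u_0,-v_0)\in H^2_-$, contradicting the equality just obtained. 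The essential mechanism is the disconnectedness of $H^2$, not any local order-of-vanishing computation.
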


By this theorem, we can not expect 
the existence of CMC surfaces of mixed type.
In fact, in \cite{HKKUY} with Kokubu, Umehara and Yamada, 
the first and second authors have proved that 
there do not exist (connected) CMC surfaces of mixed type.

On the other hand, it is known that 
for a maxface with conelike singular points,
its conjugate has fold singular points,
and vice versa (\cite{KimYang}, \cite{FKKRSUYY_Okayama}).

\begin{figure}[htb]
\begin{center}
 \begin{tabular}{{c@{\hspace{10mm}}c}}
  \resizebox{5cm}{!}{\includegraphics{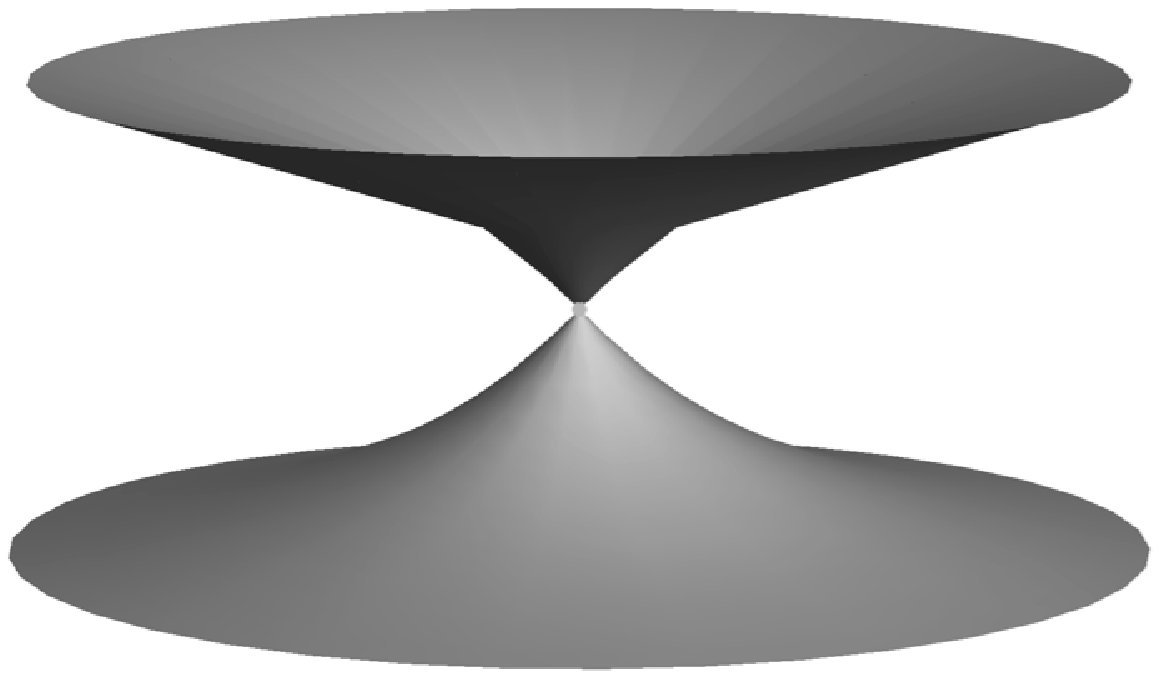}}&
  \resizebox{5cm}{!}{\includegraphics{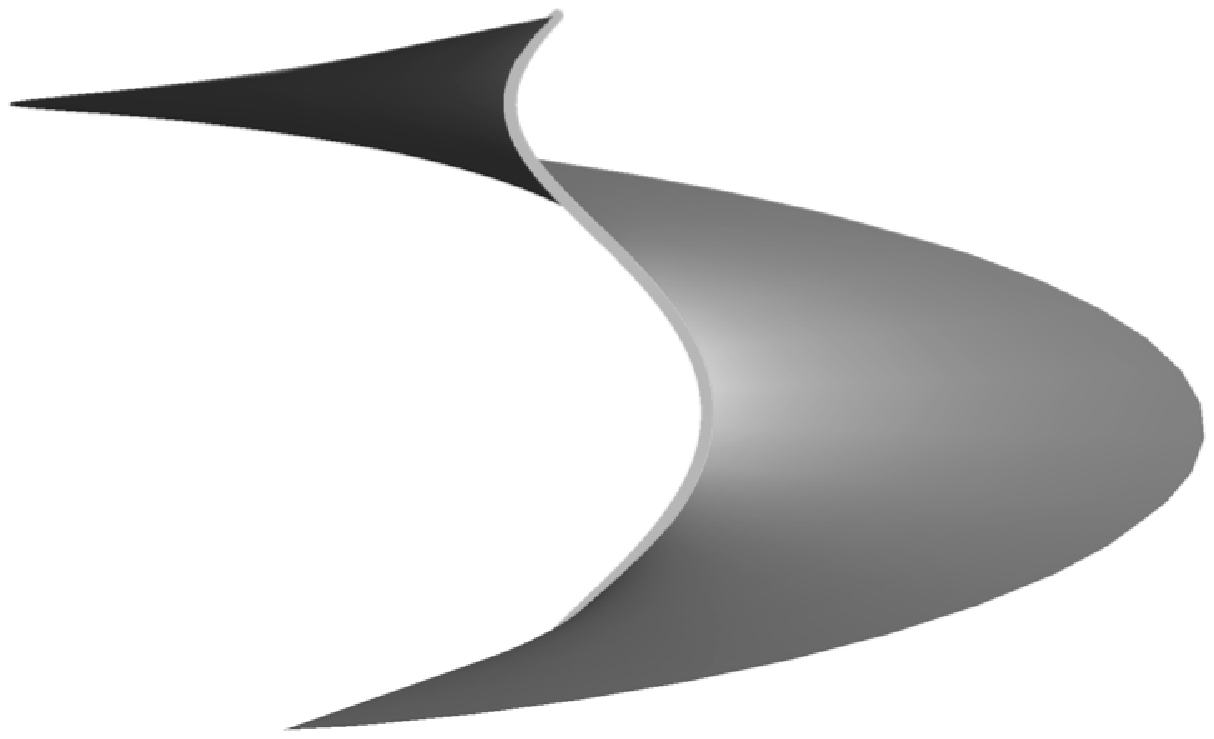}} \\
  {\footnotesize  Maximal catenoid.} &
  {\footnotesize  Maximal helicoid.}
 \end{tabular}
 \caption{The left figure is a maximal catenoid which is a 
        maxface with conelike singular points.
        The right figure is a maximal helicoid 
        which is the conjugate of a maximal catenoid.
        Its singular point set consists of fold singular points.}
\end{center}
\end{figure}

\begin{figure}[htb]
\begin{center}
 \begin{tabular}{{c@{\hspace{10mm}}c}}
  \resizebox{5cm}{!}{\includegraphics{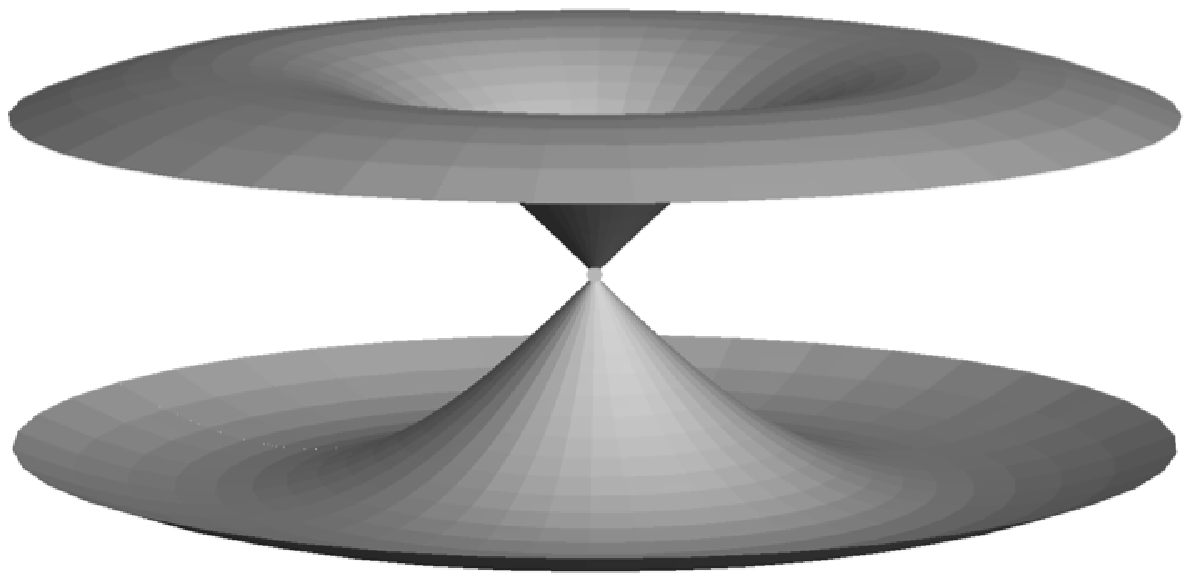}}&
  \resizebox{3cm}{!}{\includegraphics{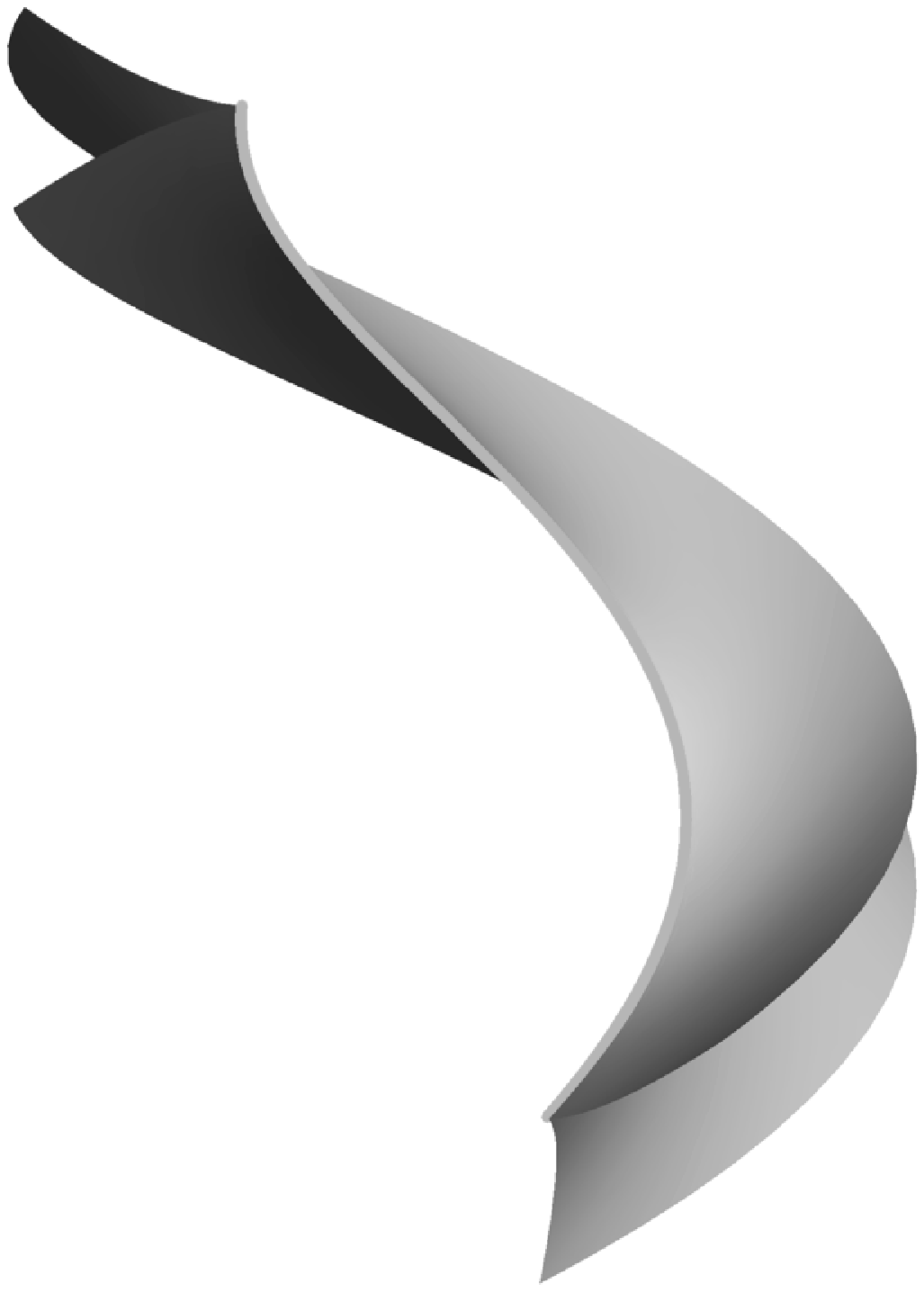}} \\
  {\footnotesize  A spacelike Delaunay surface.} &
  {\footnotesize  The conjugate.}
 \end{tabular}
 \caption{The left figure is a spacelike Delaunay surface whose axis is timelike.
        Its singular point set consists of conelike singular points.
        The right figure is the conjugate of the left. Its singular points are {\it not\/} fold.}
        \label{fig:CMC-duality}
\end{center}
\end{figure}

Although fold singular points never appear
on generalized spacelike CMC surfaces by Theorem \ref{thm:fold},
there exist generalized spacelike CMC surfaces
having conelike singular points 
(cf.\ Figure \ref{fig:CMC-duality}, Remark \ref{rem:conelike}).
Therefore, it is natural to ask as follows: 
{\it What are the singular points which appear 
on the conjugate of generalized spacelike CMC surfaces 
with conelike singular points ?\/}
We answer to this problem in the case of 
generalized spacelike CMC surfaces of revolution
(i.e., spacelike {\it Delaunay\/} surfaces).

\begin{theorem}\label{thm:2-5}
For a spacelike Delaunay surface with conelike singular points,
its conjugate has $(2,5)$-cuspidal edges.
\end{theorem}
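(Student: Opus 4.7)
\smallskip
\noindent\textbf{Proof proposal.}
The plan is to write down an explicit parametrization of a spacelike Delaunay surface with conelike singular points, construct its conjugate from the holomorphic Weierstrass-type data, and then verify the $(2,5)$-cuspidal edge condition by a criterion on the jets transverse to the singular curve.

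First, I would use the generalized spacelike CMC framework of Umeda (Definition~\ref{def:geneCMC}) to set up a Weierstrass-type representation $(g,\omega)$. A spacelike Delaunay surface with timelike axis is rotationally symmetric, so in isothermal coordinates the representation reduces to a one-variable system that can be integrated explicitly. The conelike singular points should form parallel circles corresponding to the locus where the secondary Gauss map $g$ takes values on the unit circle, which is the same locus where the induced metric degenerates.

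Second, I would construct the conjugate inside the associate family by the standard device of multiplying $\omega$ (or, equivalently, the holomorphic null lift) by $i$. Starting from the explicit Delaunay data this produces an explicit parametrization of the conjugate surface. Since the singular set is determined by $|g|=1$, it remains the same real-analytic curve in the source domain, and the null direction of the differential can be written down explicitly.

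Third, I would apply a criterion for $(2,5)$-cuspidal edges. After choosing local coordinates $(u,v)$ so that the singular set is $\{v=0\}$ and $\partial_v$ generates the null direction, a corank-one singular point is a $(2,5)$-cuspidal edge precisely when the standard cuspidal edge obstruction together with the intermediate $(2,3)$- and $(2,4)$-obstructions vanish along the singular curve while the fifth-order invariant, built from the Taylor coefficients of the parametrization in the $v$-direction together with the $u$-derivative of the lowest non-vanishing coefficient, is non-zero. Using Theorem~\ref{thm:fold} (which forbids fold-type degeneracy) together with the explicit form of the conjugate, I would verify these conditions pointwise along the singular curve.

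The main obstacle will be organising the $v$-expansion of the conjugate cleanly enough to see at once that the lower-order obstructions collapse (so that the singularity is neither a classical cuspidal edge nor a $(2,4)$-cusp nor a fold) and that the fifth-order invariant is everywhere non-zero. Exploiting the $S^1$-symmetry of the original Delaunay surface, which survives in the conjugate as a rigid motion mapping the singular curve to itself, should reduce the verification to a single point on each singular circle and make the higher-order computation manageable.
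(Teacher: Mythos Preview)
Your overall strategy---explicit parametrization of the Delaunay surface and of its conjugate, followed by verification via a jet criterion for $(2,5)$-cuspidal edges---is the same as the paper's. The paper carries this out using the explicit closed forms for the conjugates (Facts~\ref{fact:conjugate-T}--\ref{fact:conjugate-L}, taken from \cite{H_Delaunay}) together with the criterion of Theorem~\ref{thm:criterion}, checking conditions \eqref{eq:condition-3} and \eqref{eq:condition-4} case by case according to the causal type of the axis.

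There is, however, a real gap in your second step. You propose to obtain the conjugate ``by multiplying $\omega$ (or, equivalently, the holomorphic null lift) by $i$.'' That device belongs to the maximal ($H=0$) case, where the Weierstrass data are holomorphic and the associate family is generated by $\omega\mapsto e^{i\theta}\omega$. For non-maximal CMC surfaces the Gauss map $g$ is a \emph{non-holomorphic} harmonic map, $\omega=\bar g_z(1-|g|^2)^{-2}\,dz$ is determined by $g$, and there is no holomorphic null lift to rotate. The conjugate is defined by $Q\mapsto iQ$ with $ds^2$ and $H$ fixed (cf.\ the associate family in Section~\ref{sec:prelim}), which forces a genuine change in the harmonic map $g$; you cannot produce $X^{\#}$ from the original $(g,\omega)$ by a pointwise algebraic manipulation. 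Getting an explicit parametrization of $X^{\#}$ therefore requires re-integrating the Gauss--Codazzi system with the rotated Hopf differential, or invoking a prior classification---which is precisely why the paper imports Facts~\ref{fact:conjugate-T}--\ref{fact:conjugate-L} from \cite{H_Delaunay}. Without a correct explicit $X^{\#}$ you have nothing on which to run the jet computation.

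A secondary point: invoking Theorem~\ref{thm:fold} only rules out folds; it does not rule out ordinary $(2,3)$-cuspidal edges or other types. The vanishing of the third-order determinant \eqref{eq:condition-3} along the entire singular curve and the non-vanishing of \eqref{eq:condition-4} must be verified directly from the parametrization of $X^{\#}$, and the paper does this separately for each axis type. Your symmetry reduction to a single point is sound once you have the correct $X^{\#}$, but it does not substitute for that verification.
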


We remark that maxfaces do {\it not\/} admit any $(2,5)$-cuspidal edges 
(cf.\ Remark \ref{rem:maxface_25}).
By Theorem \ref{thm:fold} and Theorem \ref{thm:2-5}, 
we may conclude that 
the singularity types of spacelike CMC surfaces 
are different from those of maximal surfaces.

For the proof of Theorem \ref{thm:2-5}, we give a criterion for $(2,5)$-cuspidal edges 
in Theorem \ref{thm:criterion}.
A criterion for $(2,5)$-cusps of plane curves 
can be found in \cite[Theorem 1.23]{Porteous}.
Recently, Ishikawa-Yamashita found an interesting
example \cite[Example 12.4]{IshiYama2015} 
(\cite[Example 9.4]{IshiYama2016}) of
tangent surface having $(2,5)$-cuspidal edges
in non-projectively-flat $3$-space.

This paper is organized as follows.
In Section \ref{sec:prelim}, we recall some basic facts on 
spacelike CMC surfaces and surfaces with singularities.
In Section \ref{sec:proof-main}, we review 
generalized spacelike CMC surfaces and prove Theorem \ref{thm:fold}.
In Section \ref{sec:criterion}, we give a criterion for $(2,5)$-cuspidal edges 
(cf.\ Theorem \ref{thm:criterion}) and show Theorem \ref{thm:2-5}.

\section{Preliminaries}\label{sec:prelim}

Denote by $L^3=(\R^3\,,\,\inner{~}{~})$ 
the Lorentz-Minkowski $3$-space
with the Lorentzian inner product
$\inner{\vect{x}}{\vect{x}}=-x_0^2+x_1^2+x_2^2$, 
where $\vect{x}=(x_0,x_1,x_2) \in L^3$.
We set $H^2$ and $H^2_{\pm}$ as
\[
  H^2 := \{ p \in L^3 \,;\, \inner{p}{p}=-1 \}
\]
and $H^2_{\pm} := H^2\cap L^3_{\pm}$, respectively,
where $L^3_{\pm}:=\{(x_0,x_1,x_2)\in L^3 \,;\, x_0\gtrless 0 \}$.
That is, $H^2$ is the union of two hyperbolic planes.
The stereographic projection $\pi : H^2\rightarrow \hat\C$ is defined by
\begin{equation}\label{eq:stereo-proj}
  \pi(p)=\frac{x_1+i x_2}{1-x_0},\qquad
  p=(x_0,\,x_1,\,x_2)\in H^2,
\end{equation}
where $\hat\C := \C\cup\{\infty\}$ is the Riemann sphere.
If we denote by $\D=\{ z \in \C\, ; \, |z|<1\}$ the unit disk,
each of the restrictions 
\[
  \pi|_{H^2_-} : H^2_-\rightarrow \D,\qquad
  \pi|_{H^2_+} : H^2_+\rightarrow \hat\C\setminus \bar{\D}
\]
gives diffeomorphisms onto the image,
and hence $\pi(H^2)=\hat\C\setminus S^1$,
where $\bar{\D}=\D\cup S^1$ and $S^1=\{ z \in \hat\C\, ; \, |z|=1\}$.
Therefore $\hat\C$ may be considered as 
a compactification of $H^2$.

We denote by $\Sigma$ an oriented smooth $2$-manifold.
In this paper, a {\it surface\/} in $L^3$ is defined to be an immersion $X$ 
of $\Sigma$ into $L^3$.
We denote by $ds^2=X^{\ast}\inner{~}{~}$ {\it the first fundamental form\/} of $X$.
If $ds^2$ defines a Riemannian metric on $\Sigma$,
$X$ is called {\it spacelike\/}.
Taking a (timelike) unit normal vector field 
$\nu : \Sigma \rightarrow H^2 \subset L^3$ along $X$, 
the {\it second fundamental form\/} of $X$
is given by $I\!I=\inner{-d\nu}{df}$.
Then, the {\it mean curvature function\/} $H$ is defined by
$
  H=(\kappa_1+\kappa_2)/2,
$
where $\kappa_1$, $\kappa_2$ are the {\it principal curvatures\/} of $X$.
We call the composition $g:=\pi\circ \nu : \Sigma \rightarrow \hat\C$ 
the {\it Gauss map\/} of $X$, where $\pi$ is the stereographic projection 
given by \eqref{eq:stereo-proj}.

If we take a conformal coordinate system $(U,\,z=u+i v)$
of the Riemann surface $(\Sigma,\, ds^2)$,
we may write $ds^2$ and $I\!I$ as
\[
  ds^2
  =e^{2\sigma}dz\,d\bar{z},\qquad
  I\!I=Q + \bar{Q} + H ds^2,
\]
where $Q=q\,dz^2 ~(q=\inner{f_{zz}}{\nu})$ is the {\it Hopf differential\/} of $X$.
Then, the Gauss and Codazzi equations are given by
\begin{equation}
  \label{eq:compatiblity}
  4\sigma_{z\bar{z}}=e^{2\sigma}H^2-4e^{-2\sigma}\left|q\right|^2,\qquad
  q_{\bar{z}}=e^{2\sigma} H_{z},
\end{equation}
respectively.
According to the fundamental theorem of surface theory, 
if the triplet $(ds^2, H, Q=q\,dz^2)$ 
defined on a simply connected domain $U \subset \C$
satisfies \eqref{eq:compatiblity},
there exists a conformal immersion $X : U \rightarrow L^3$
such that $ds^2$ is the first fundamental form, 
$Q$ is the Hopf differential, 
and $H$ is the mean curvature 
of $X$.

\subsection{Spacelike CMC surface, associate family, Kenmotsu-type representation formula}

A spacelike surface in $L^3$ is said to be {\it CMC-$H$\/} or {\it CMC\/}, 
if its mean curvature is identically a constant $H$.
In particular, a surface is called {\it maximal\/}
if its mean curvature is identically zero.
Let $X : \Sigma \rightarrow L^3$ be a spacelike CMC-$H$ surface
of which the triplet is given by $(ds^2,\,H,\, Q)$.
If $\Sigma$ is simply connected, there exists a family of spacelike CMC-$H$ surfaces 
$\{ X_\theta \}_{\theta\in S^1}=\{ X_\theta : \Sigma \rightarrow L^3 \}_{\theta\in S^1}$,
where the triplet of $X_\theta$ is given by $(ds^2,\,H,\, Q_\theta=e^{i\theta} Q)$
for each $\theta\in S^1$.
The family $\{ X_\theta \}_{\theta\in S^1}$ is called the {\it associate family\/} of $X$.
We call $X^\#:=X_{\pi/2}$ the {\it conjugate\/} of $X$.

A smooth map 
$g : \Sigma \rightarrow \pi(H^2)$ defined on a Riemann surface $\Sigma$
is called {\it harmonic\/} if it satisfies
\begin{equation}\label{eq:HME}
  g_{z\bar{z}}+\frac{2\bar{g}}{1-|g|^2}g_{z}g_{\bar{z}}=0,
\end{equation}
where $z$ is a local conformal coordinate of $\Sigma$.
Akutagawa-Nishikawa \cite{AkutagawaNishikawa} proved 
the Kenmotsu-type representation formula 
for spacelike CMC surfaces as follows.

\begin{fact}[\cite{AkutagawaNishikawa}]
Let $X : \Sigma \rightarrow L^3$ be a conformal non-maximal CMC-$H$ immersion
defined on a simply connected Riemann surface $\Sigma$.
Then there exists a harmonic map $g=g(z)$ such that
\begin{equation}\label{eq:AN-rep}
  X(z)=\frac{2}{H}\Re \int^{z}_{z_0} 
  \left(-2g, 1+g^2, i(1-g^2) \right)\omega,
\end{equation}
where $z_0\in \Sigma$ is a base point and
\begin{equation}\label{eq:omega}
  \omega:=\hat\omega dz\qquad
  \left(\hat\omega = \dfrac{\bar{g}_z}{(1-|g|^2)^2}\right).
\end{equation}
Conversely, 
take a non-holomorphic harmonic map $g : \Sigma \rightarrow \pi(H^2)$
defined on a simply connected Riemann surface $\Sigma$
and a base point $z_0\in \Sigma$.
Then the integration in \eqref{eq:AN-rep} does not
depend on the choice of a path joining $z_0$ and $z$,
and $X$ in \eqref{eq:AN-rep} is a spacelike CMC-$H$ immersion 
whose Gauss map is $g$.
Furthermore, the first and second fundamental forms are given by
\[
  ds^2=(1-|g|^2)^2|\omega|^2,\qquad
  I\!I=Q+\bar{Q}+H\,ds^2\qquad
  \left(Q=-\omega dg \right),
\]
respectively.
\end{fact}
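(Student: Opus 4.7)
My plan is to establish the forward and converse directions separately, following the standard Kenmotsu-Weierstrass strategy adapted to the Lorentzian setting. In both directions the stereographic identification $\nu=\pi^{-1}(g)$, which expresses each component of $\nu\in H^2$ as a rational function of $g$ and $\bar g$, is the central algebraic tool.

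For the forward direction, I would first prove that the Gauss map $g=\pi\circ\nu$ of a conformal CMC-$H$ immersion $X$ satisfies the harmonic map equation \eqref{eq:HME}. This is the Lorentzian analogue of the Ruh-Vilms theorem: expressing $\nu_z$ and $\nu_{\bar z}$ in terms of $g$ via the stereographic identification and substituting into the Codazzi identity $q_{\bar z}=e^{2\sigma}H_z$ from \eqref{eq:compatiblity}, the condition $H_z\equiv 0$ reduces, after simplification, to \eqref{eq:HME}. Next, the orthogonality relations $\inner{X_z}{X_z}=0$ and $\inner{X_z}{\nu}=0$ force $X_z$ to be proportional to $(-2g,\,1+g^2,\,i(1-g^2))$; matching with $\nu_z$ expressed through $g$ determines the scalar factor to be $(2/H)\hat\omega$. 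Integrating $X_z\,dz+X_{\bar z}\,d\bar z$ along a path from $z_0$ then reproduces formula \eqref{eq:AN-rep}.

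For the converse, I would verify four statements in order. (1) Path-independence of the integral on the simply-connected $\Sigma$ reduces to closedness of the real part of $(-2g,\,1+g^2,\,i(1-g^2))\hat\omega\,dz$, which follows by direct computation using \eqref{eq:HME} and its complex conjugate; the key intermediate identity is
\begin{equation*}
\bar g_{z\bar z}=-\frac{2g}{1-|g|^2}\bar g_z\bar g_{\bar z}.
\end{equation*}
(2) Conformality is the algebraic identity $-(-2g)^2+(1+g^2)^2-(1-g^2)^2=0$. (3) The stated formula for $ds^2$ and the spacelike property follow from $-4|g|^2+|1+g^2|^2+|1-g^2|^2=2(1-|g|^2)^2>0$, combined with the fact that $g$ is non-holomorphic (so $\hat\omega\not\equiv 0$) and $|g|\ne 1$ on $\pi(H^2)=\hat\C\setminus S^1$. (4) Taking $\nu:=\pi^{-1}(g)$ as the candidate unit normal, one checks $\inner{\nu}{\nu}=-1$ and $\inner{X_z}{\nu}=0$ by direct algebra, so $g$ is the Gauss map; then $q=\inner{X_{zz}}{\nu}$ gives $Q=-\omega\,dg$, and evaluating $\inner{X_{z\bar z}}{\nu}$ confirms that the mean curvature equals the prescribed $H$.

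The hardest part will be the closedness check in step (1). Each component of $\partial_{\bar z}\bigl((-2g,\,1+g^2,\,i(1-g^2))\hat\omega\bigr)$ expands via the quotient rule into several terms involving $\bar g_{z\bar z}$, $g_{\bar z}$, $\bar g_{\bar z}$, and mixed products of $g$ and $\bar g$; the cancellations must occur simultaneously in all three components and rely on using both forms of \eqref{eq:HME}. The Ruh-Vilms step in the forward direction is similarly delicate, in particular because the sign $\inner{\nu}{\nu}=-1$ must be tracked throughout and \eqref{eq:HME} must be recognized as the harmonic map equation for the Poincar\'e metric $4|dz|^2/(1-|z|^2)^2$ on $\pi(H^2_-)=\D$.
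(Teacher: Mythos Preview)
The paper does not prove this statement at all: it is recorded as a \emph{Fact} with a citation to Akutagawa--Nishikawa and no argument is given. So there is no proof in the paper to compare your proposal against.

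That said, your outline is the standard Kenmotsu-type argument and is essentially what one finds in the cited reference. The algebraic identities you quote are correct (in particular $-(-2g)^2+(1+g^2)^2-(1-g^2)^2=0$ for conformality and $-4|g|^2+|1+g^2|^2+|1-g^2|^2=2(1-|g|^2)^2$ for the metric), and the organization into (i) harmonicity of $g$ via Codazzi with $H_z=0$, (ii) identification of $X_z$ up to scalar from the two null conditions, (iii) closedness from \eqref{eq:HME}, and (iv) verification of $\nu$, $Q$, $H$ is exactly the right decomposition. One small caveat: in the forward direction you should also handle the case $|g|>1$ (i.e.\ $\nu\in H^2_+$), where the relevant model metric is on $\hat\C\setminus\bar\D$ rather than $\D$; the harmonic map equation \eqref{eq:HME} has the same form there since the conformal factor $4/(1-|g|^2)^2$ is unchanged, but your parenthetical remark mentions only $\pi(H^2_-)=\D$.
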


\subsection{Surface with singularities}

For a smooth map $X : \Sigma \rightarrow \R^3$
of a smooth $2$-manifold $\Sigma$,
a point $p\in \Sigma$ is called {\it singular\/},
if $X$ is not immersion at $p$.
A non-singular point is called {\it regular\/}.
We denote by $S(X)$ (resp.\ $R(X)$) the set of singular (resp.\ regular) points of $X$.

A smooth map 
$X : \Sigma \rightarrow \R^3$
is called a {\it frontal\/}, if for any point $p\in \Sigma$,
there exist an open neighborhood $U$ of $p$
and a smooth map $\vect{n} : U \rightarrow S^2$
such that $dX(\vect{v})\cdot \vect{n}(q)=0$ holds
for each $q\in U$ and $\vect{v}\in T_q\Sigma$,
where the dot ``\,$\cdot$\,'' means the Euclidean inner product.
Such a map $\vect{n}$ is called a ({\it Euclidean\/}) {\it unit normal vector field\/} along $X$.
(If $(X,\vect{n})$ is an immersion, $X$ is called a {\it front\/}.)
We call $\lambda := \det (X_u,\,X_v,\,\vect{n})$ 
the {\it signed area density function\/}, 
where $(u,v)$ is the coordinates of $U$.
A point $p\in U$ is a singular point of $X$
if and only if $\lambda(p)=0$.
If $d\lambda(p)\neq 0$, 
a singular point $p$ is called {\it non-degenerate\/}.
We remark that if $p$ is non-degenerate,
then ${\rm rank}(dX)_p=1$ holds.
By the implicit function theorem,
there exists a regular curve $\gamma(t)$
($|t|<\varepsilon$)
on the $uv$-plane such that $\gamma(0)=p$
and the image of $\gamma$
coincides with the singular point set $S(X)$ near $p$,
where $\varepsilon>0$.
We call $\gamma(t)$ the {\it singular curve\/}
and $\gamma'=d\gamma/dt$ the singular direction.
Then, there exists a non-zero smooth vector field $\eta(t)$
along $\gamma(t)$ such that $\eta(t)$
is a null vector (i.e., $dX(\eta(t))=0$) for each $t$.
Such a vector field $\eta(t)$ is called a {\it null vector field\/}.
If $\gamma'(0)$ is not proportional to $\eta(0)$,
then $p=\gamma(0)$ is called of the {\it first kind\/}.
In this setting, we can extend $\xi(t):=\gamma'(t)$ and $\eta(t)$ 
to smooth vector fields $\xi=\xi(u,v)$ and $\eta=\eta(u,v)$ on $U$, respectively. 

The following two lemmas are well-known (see \cite{GG}).
They play crucial roles in Whitney \cite{Whitney} to give 
a criterion for a given smooth map to be a cross cap. 
Let $h(u,v)$ be a smooth function defined around the
origin.

\begin{fact}[Division Lemma]
\label{fact:division}
 If $h(u,0)$ vanishes for sufficiently small $u$,
 then there exists a smooth function 
 $\tilde{h}(u,v)$ defined around the origin such that 
 $h(u,v)=v\tilde h(u,v)$ holds.
\end{fact}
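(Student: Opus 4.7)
The plan is to produce $\tilde{h}$ explicitly by an integral formula and then verify smoothness via differentiation under the integral sign. The key observation is that since $h(u,0)=0$ for all sufficiently small $u$, we may write $h(u,v)=h(u,v)-h(u,0)$, and the fundamental theorem of calculus (applied in the second variable) lets us extract a factor of $v$.

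Concretely, I would set
\[
  \tilde{h}(u,v):=\int_0^1 h_v(u,tv)\,dt,
\]
where $h_v=\partial h/\partial v$. Then by the chain rule,
\[
  v\,\tilde{h}(u,v)
  =\int_0^1 v\,h_v(u,tv)\,dt
  =\int_0^1 \frac{d}{dt}\bigl[h(u,tv)\bigr]\,dt
  =h(u,v)-h(u,0)=h(u,v),
\]
the last equality using the hypothesis $h(u,0)\equiv 0$ near the origin. This gives the required factorization.

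The remaining step is to check that $\tilde{h}$ is $C^\infty$ on a neighborhood of the origin. Since $h$ is smooth, $h_v$ is smooth, and for any multi-index $\alpha=(\alpha_1,\alpha_2)$ the partial derivative $\partial^\alpha_{(u,v)}h_v(u,tv)$ is continuous jointly in $(t,u,v)$ on a compact neighborhood of $\{(t,0,0):0\le t\le 1\}$. Differentiation under the integral sign (justified by the standard Leibniz rule for parameter-dependent integrals with continuous integrand and continuous partial derivatives on a compact set in the parameter) then yields smoothness of $\tilde{h}$ to all orders.

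I do not expect any serious obstacle here: the content is entirely the Hadamard-type trick of writing a function vanishing on $\{v=0\}$ as $v$ times a smooth remainder, and the only mild subtlety is the routine verification that the parameter integral preserves smoothness, which follows from uniform bounds on derivatives of the integrand on compact neighborhoods of the origin.
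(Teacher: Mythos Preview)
Your argument is correct and is the standard Hadamard-type proof of the Division Lemma. Note, however, that the paper does not actually prove this statement: it records it as a well-known fact with a reference to Golubitsky--Guillemin \cite{GG}, so there is no ``paper's own proof'' to compare against. Your integral formula $\tilde h(u,v)=\int_0^1 h_v(u,tv)\,dt$ together with differentiation under the integral sign is exactly the classical route, and nothing further is needed.
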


\begin{fact}[Whitney Lemma]
\label{fact:whitney}
 If $h(u,v)=h(-u,v)$ holds for sufficiently small $(u,v)$,
 then there exists a smooth function 
$\tilde h(u,v)$ defined around the
 origin such that $h(u,v)=\tilde h(u^2,v)$ holds.
\end{fact}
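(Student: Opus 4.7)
The plan is to reduce Fact \ref{fact:whitney} to a formal Taylor expansion in $u^{2}$ via iterated application of the Division Lemma (Fact \ref{fact:division}), and then upgrade that expansion to an honest smooth $\tilde h(s,v)$ by combining Borel's theorem with a flat-remainder argument.

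First, since $u\mapsto h(u,v)-h(0,v)$ vanishes on $\{u=0\}$, applying Fact \ref{fact:division} (with the roles of $u$ and $v$ swapped) produces a smooth $k$ with $h(u,v)=h(0,v)+u\,k(u,v)$. The symmetry $h(u,v)=h(-u,v)$ then forces $k(-u,v)=-k(u,v)$, so in particular $k(0,v)=0$. A second application of the Division Lemma yields
\[
h(u,v) = h(0,v) + u^{2}\,\ell_{1}(u,v),
\]
and a short check using evenness shows that $\ell_{1}$ is itself even in $u$. Iterating this step $N$ times, I would obtain smooth functions $a_{0}(v),\dots,a_{N}(v)$ and a smooth, even-in-$u$ remainder $\ell_{N+1}$ satisfying
\[
h(u,v) = \sum_{k=0}^{N} a_{k}(v)\,u^{2k} + u^{2N+2}\,\ell_{N+1}(u,v).
\]

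Next I would invoke Borel's theorem to produce a smooth function $\tilde h_{0}(s,v)$ whose formal Taylor series in $s$ at $s=0$ is $\sum_{k\ge 0}a_{k}(v)\,s^{k}$. The difference $R(u,v):=h(u,v)-\tilde h_{0}(u^{2},v)$ is then smooth, even in $u$, and $C^{\infty}$-flat along $\{u=0\}$. Finally I would set $\rho(s,v):=R(\sqrt{s},v)$ for $s\ge 0$ and $\rho(s,v):=0$ for $s\le 0$, and show that $\rho$ extends smoothly across $s=0$. The function $\tilde h(s,v):=\tilde h_{0}(s,v)+\rho(s,v)$ then satisfies $h(u,v)=\tilde h(u^{2},v)$ by construction.

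The main obstacle is verifying that $\rho$ is genuinely smooth at $s=0$. Differentiating $R(\sqrt{s},v)$ in $s$ produces chain-rule terms containing inverse powers of $\sqrt{s}$, but each such term is controlled by the flatness of $R$ in $u$: evenness of $R$ forces $\partial_{u}^{j}R(0,v)=0$ for every odd $j$, while flatness gives $\partial_{u}^{j}R(u,v)=O(u^{M})$ for every $M$, which absorbs the singular coefficients arising from the chain rule. Hence every right-hand derivative $\partial_{s}^{m}\rho(s,v)$ at $s=0$ tends to $0$, matching the trivial extension on $s\le 0$ and producing the required smooth $\tilde h$.
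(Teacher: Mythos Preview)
The paper does not actually prove Fact~\ref{fact:whitney}; it is stated as a well-known lemma with a reference to Golubitsky--Guillemin~\cite{GG}, so there is no ``paper's own proof'' to compare against. Your sketch is therefore not reproducing the paper's argument but supplying one where the paper gives only a citation.

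That said, your outline is a valid classical route. The iterated use of the Division Lemma to peel off the coefficients $a_k(v)$ is correct, and the parity bookkeeping (odd $k$, even $\ell_1$, etc.) is right. Two points deserve a little more care. First, the Borel step you invoke must be the \emph{parametric} version: given smooth $a_k(v)$ you need a smooth $\tilde h_0(s,v)$ with $\partial_s^k\tilde h_0(0,v)=k!\,a_k(v)$ for all $k$; this is standard but should be named as such. Second, for the smoothness of $\rho$ at $s=0$ you need the flatness estimates $\partial_u^jR(u,v)=O(|u|^M)$ to hold \emph{locally uniformly in $v$} (which they do, by Taylor's theorem with integral remainder), and you should appeal explicitly to the elementary fact that a function which is $C^\infty$ on $\{s>0\}$, vanishes on $\{s\le 0\}$, and has all partial derivatives extending continuously to $0$ at $s=0$ is $C^\infty$ across $s=0$.

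A slightly shorter alternative, closer to what one finds in \cite{GG}, avoids Borel altogether: set $\tilde h(s,v):=h(\sqrt{s},v)$ for $s\ge 0$ and show directly by induction that each $\partial_s$-derivative again has the form $g(\sqrt{s},v)$ for some smooth \emph{even} $g$ (using that $h_u$ is odd, hence $h_u(u,v)=u\,g_1(u,v)$ with $g_1$ even, so $\partial_s\tilde h=\tfrac12 g_1(\sqrt{s},v)$, etc.). This gives Whitney-smoothness on $\{s\ge 0\}$ in one stroke and then a Seeley-type extension finishes. Your Borel-plus-flat-remainder argument reaches the same conclusion with only a mild detour.
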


Using Fact \ref{fact:division}, \ref{fact:whitney},
the following facts can be proved.
(See \cite[Theorem 1.4]{FSUY}.)

\begin{fact}
\label{fact:splitting}
For a smooth function $h(u,v)$,
there exist smooth functions
$\alpha(u,v)$ and $\beta(u,v)$
defined around the origin 
such that
$
  h(u,v)=\alpha(u,v^2)+v\beta(u,v^2)
$
holds.
\end{fact}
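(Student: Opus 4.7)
The plan is to split $h$ into its even and odd parts with respect to $v$, and then apply the Whitney Lemma (Fact \ref{fact:whitney}) to each piece (after one use of the Division Lemma, Fact \ref{fact:division}, for the odd part).

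First I would write
\[
  h(u,v) = \frac{h(u,v)+h(u,-v)}{2} + \frac{h(u,v)-h(u,-v)}{2} =: h_+(u,v)+h_-(u,v).
\]
Both $h_+$ and $h_-$ are smooth near the origin. The even part $h_+$ satisfies $h_+(u,v)=h_+(u,-v)$, so Fact \ref{fact:whitney} produces a smooth function $\alpha(u,v)$ defined near the origin with $h_+(u,v)=\alpha(u,v^2)$.

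For the odd part, note that $h_-(u,0)=0$, so Fact \ref{fact:division} yields a smooth $\tilde h(u,v)$ with $h_-(u,v)=v\,\tilde h(u,v)$. The key step is to check that $\tilde h$ is actually even in $v$: for $v\neq 0$ we have $\tilde h(u,v)=h_-(u,v)/v$, and the relation $h_-(u,-v)=-h_-(u,v)$ gives
\[
  \tilde h(u,-v) = \frac{h_-(u,-v)}{-v} = \frac{-h_-(u,v)}{-v} = \tilde h(u,v),
\]
and this even symmetry extends by continuity (or by the fact that $\tilde h$ is smooth) to all $(u,v)$ near the origin. Applying Fact \ref{fact:whitney} to $\tilde h$ produces a smooth $\beta(u,v)$ with $\tilde h(u,v)=\beta(u,v^2)$, hence $h_-(u,v)=v\,\beta(u,v^2)$. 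Combining the two pieces gives the desired decomposition $h(u,v)=\alpha(u,v^2)+v\,\beta(u,v^2)$.

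The main obstacle, and really the only subtle point, is verifying that $\tilde h$ inherits evenness from $h_-$; once that is established, Fact \ref{fact:whitney} applies directly and there is no further work. Division and Whitney do all the heavy lifting.
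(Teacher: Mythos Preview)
Your proof is correct and is exactly the standard argument. The paper does not actually supply a proof of Fact~\ref{fact:splitting}; it only remarks that it can be proved using Facts~\ref{fact:division} and~\ref{fact:whitney} and cites \cite[Theorem 1.4]{FSUY}, which is precisely the route you take.
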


\begin{fact}
\label{fact:sing-form}
Let $p=(0,0)$
be a non-degenerate singular point
of a frontal
$X:U\rightarrow \R^2$ defined on a domain $U\subset \R^2$,
$\gamma(t)$ $(|t|<\varepsilon)$
the singular curve  passing through $p=\gamma(0)$,
and $\eta(t)$ a null vector field along $\gamma(t)$.
If $p$ is of the first kind,
then there exist diffeomorphisms
$\Phi$ defined on $\R^3$,
$\varphi$ on $\R^2$,
and a smooth function $h(u,v)$
defined around the origin
such that 
$
  (\Phi\circ X \circ \varphi) (u,v) = (u,v^2,v^3 h (u,v))
$
holds.
\end{fact}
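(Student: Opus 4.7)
The plan is to bring $X$ into the stated normal form by successively applying diffeomorphisms on the source and the target. First, since $p$ is of the first kind, the vectors $\gamma'(0)$ and $\eta(0)$ are linearly independent, so I choose local coordinates $(u,v)$ on $U$ centered at $p$ in which the singular set $S(X)$ is the line $\{v=0\}$ and the null vector field along it is $\partial_v$. Because $dX(\partial_v)$ vanishes on $\{v=0\}$, we have $X_v(u,0)\equiv 0$; applying the Division Lemma (Fact \ref{fact:division}) twice then yields
\[
 X(u,v) = X(u,0) + v^2\,W(u,v)
\]
for some smooth $\R^3$-valued map $W$ with $W(u,0)=\tfrac{1}{2}X_{vv}(u,0)$.

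Next I would extract the geometric information from the frontal structure and the non-degeneracy of $p$. Since $X_v = v(2W+vW_v)$, the signed area density factors as $\lambda(u,v)=v\,\det(X_u,\,2W+vW_v,\,\vect{n})$, so the condition $d\lambda(p)\neq 0$ is equivalent to $\det(X_u(0,0),\,W(0,0),\,\vect{n}(0,0))\neq 0$. Because $X_u$ and $W$ are both orthogonal to $\vect{n}$ at $(0,0)$, this says that $X_u(0,0)$ and $W(0,0)$ are linearly independent vectors spanning $\vect{n}(0,0)^{\perp}$. After an affine motion of $\R^3$ I may therefore assume $X(0,0)=0$, $\vect{n}(0,0)=(0,0,1)$, $X_u(0,0)=(1,0,0)$ and $W(0,0)=(0,c,0)$ for some $c>0$. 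Straightening the regular curve $u\mapsto X(u,0)$ to the $u$-axis by one more target diffeomorphism then gives
\[
 X(u,v) = (u,0,0) + v^2\bigl(W_1,\,W_2,\,W_3\bigr)(u,v),
 \qquad W_2(0,0)>0,\ W_1(0,0)=W_3(0,0)=0.
\]

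The remaining steps normalize one component at a time. Since $W_2(0,0)>0$, the substitution $\tilde v := v\sqrt{W_2(u,v)}$ is a local source diffeomorphism that makes the second component equal to $\tilde v^2$; the first and third components then take the form $u+\tilde v^2 C_1(u,\tilde v)$ and $\tilde v^2 C_3(u,\tilde v)$ respectively. The further source diffeomorphism $\tilde u := u+\tilde v^2 C_1(u,\tilde v)$, whose Jacobian is $1$ at the origin, reduces the first component to $\tilde u$, so after renaming we have $X(u,v)=(u,v^2,v^2 D(u,v))$ for some smooth $D$. Finally, the $\R^3$-diffeomorphism $(x_1,x_2,x_3)\mapsto\bigl(x_1,x_2,x_3-x_2 D(x_1,0)\bigr)$ replaces $X_3$ by $v^2\bigl(D(u,v)-D(u,0)\bigr)$, which is $v^3 h(u,v)$ for some smooth $h$ by a final application of the Division Lemma, producing the desired form $(u,v^2,v^3 h(u,v))$.

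The main technical care is concentrated in the second step: deriving the linear independence of $X_u(0,0)$ and $W(0,0)$ (equivalently, $X_{vv}(0,0)\notin \R\,X_u(0,0)$) from the non-degeneracy condition $d\lambda(p)\neq 0$ together with the frontal identities $X_u\cdot\vect{n}=X_v\cdot\vect{n}=0$. Once this linear independence is secured, the successive normalizations are routine applications of the Division Lemma and the inverse function theorem, and the normalization $W_2(0,0)>0$ is exactly what is needed to justify both the square-root substitution and the subsequent $\tilde u$-reparametrization.
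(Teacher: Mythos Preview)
Your argument is correct. The paper does not actually prove Fact~\ref{fact:sing-form}: it merely states that the result can be obtained from Facts~\ref{fact:division} and~\ref{fact:whitney} and refers the reader to \cite[Theorem~1.4]{FSUY}. Your proof is a clean, self-contained derivation along the expected lines---choosing adapted source coordinates so that $S(X)=\{v=0\}$ with null direction $\partial_v$, using the Division Lemma twice to peel off $v^2W$, extracting the linear independence of $X_u(0,0)$ and $\tfrac12 X_{vv}(0,0)$ from $d\lambda(p)\neq0$ together with the frontal orthogonality relations, and then normalizing the three components one at a time by source and target diffeomorphisms.

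One small remark: when you ``straighten the regular curve $u\mapsto X(u,0)$ to the $u$-axis by one more target diffeomorphism'', you implicitly need that this diffeomorphism can be chosen with derivative equal to the identity at the origin, so that the normalized value $W(0,0)=(0,c,0)$ is preserved. This is true (e.g.\ take $\Psi(x_1,x_2,x_3)=(\psi(x_1),\,x_2-c^2(\psi(x_1)),\,x_3-c^3(\psi(x_1)))$ with $\psi=(c^1)^{-1}$, where $c(u)=X(u,0)$), but it is worth saying explicitly since the subsequent square-root substitution $\tilde v=v\sqrt{W_2}$ depends on $W_2(0,0)>0$ surviving this step. Apart from that, the logic is sound; note also that your route uses only the Division Lemma and not the Whitney Lemma, which is perfectly adequate here.
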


\section{Generalized spacelike CMC surface and Fold singularity}
\label{sec:proof-main}

In this section, 
first we review some definitions and introduce
some properties on singularities of generalized spacelike CMC surfaces.
Then we shall prove Theorem \ref{thm:fold}.

\subsection{Generalized spacelike CMC surface}

Umeda \cite{Umeda}
investigated singularities of 
spacelike CMC surfaces with admissible singularities
called {\it generalized spacelike CMC surfaces\/}.

\begin{definition}[{\cite{Umeda}}]\label{def:geneCMC}
Let $\Sigma$ be a Riemann surface.
For a smooth map 
$g : \Sigma \rightarrow \hat\C$,
set $S_1(g):=\{ p\in \Sigma \,;\, |g(p)|=1 \}$
and $\omega$ as in \eqref{eq:omega}.
\begin{itemize}
\item
A smooth map $g : \Sigma \rightarrow \hat\C$ 
is called {\it regular extended harmonic map\/} if
the following two conditions hold:
   \begin{itemize}
   \item[$(1)$] $\omega$ can be extended to a $1$-form of class $C^1$
         across $S_1(g)$,
   \item[$(2)$] $g$ satisfies $g_{z\bar{z}}+2(1-|g|^2)\bar{g}g_z \bar{\hat{\omega}}=0$,
   where $\omega=\hat{\omega}(z)\, dz$.
   \end{itemize} 
\item
For a non-holomorphic regular extended harmonic map
$g : \Sigma\rightarrow \hat\C$
and a non-zero constant $H\neq0$,
if the map $X : \Sigma \rightarrow L^3$
given by \eqref{eq:AN-rep} is well-defined,
then $X$ is called a {\it generalized spacelike constant mean curvature\/} surface
$($or {\it CMC, CMC-$H$\/}$)$.
\end{itemize}
The map $g$ is called the {\it Gauss map\/} of $X$.
\end{definition}

By the condition $(1)$ in Definition \ref{def:geneCMC},
we have
\begin{equation}\label{eq:bar-z}
  g_{\bar{z}}(p)=0  \qquad \text{for} 
  \quad p\in S_1(g).
\end{equation}

\begin{fact}[{\cite[Proposition 3.8]{Umeda}}]
\label{fact:geneCMC-sing}
Let $X : \Sigma \rightarrow L^3$
be a generalized spacelike CMC surface with the Gauss map $g$.
Set $S_{\infty}(g):=\{ p\in \Sigma \,;\, |g(p)|=\infty \}$.
Then,
\begin{itemize}
\item[(i)]
a point $p\in \Sigma\setminus S_{\infty}(g)$
is a singular point of $X$ 
if and only if $|g(p)|=1$ or $\omega(p)=0$.
In particular, if $\omega(p)=0$, 
then ${\rm rank}(dX)_p=0$ holds.
\item[(ii)]
a point $p\in S_{\infty}(g)$ is a singular point of $X$ 
if and only if $g^2\omega=0$ holds at $p$.
Moreover, in this case, ${\rm rank}(dX)_p=0$ holds.
\end{itemize}
In particular, if $p\in \Sigma$ is a singular point of $X$ 
satisfying ${\rm rank}(dX)_p=1$, then $|g(p)|=1$ holds.
\end{fact}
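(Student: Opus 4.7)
My plan is to work directly from the Akutagawa--Nishikawa representation \eqref{eq:AN-rep}. Away from the poles of $g$, differentiating $X$ in $z$ gives
\[
  X_z=\frac{\hat\omega}{H}\,V(g),\qquad V(g):=\bigl(-2g,\;1+g^2,\;i(1-g^2)\bigr).
\]
Writing $z=u+iv$, one has $X_u=2\Re X_z$ and $X_v=-2\Im X_z$, so $\operatorname{rank}(dX)_p=0$ iff $X_z(p)=0$, and $\operatorname{rank}(dX)_p=2$ iff $X_z(p)$ and $\overline{X_z(p)}$ are $\C$-linearly independent. The whole argument will be a case analysis driven by these equivalences.

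For part (i), the two identities $\inner{V(g)}{V(g)}=0$ and $\inner{V(g)}{\overline{V(g)}}=2(1-|g|^2)^2$ (routine Lorentzian computation) show that $X$ is automatically isotropic and reproduce $ds^2=(1-|g|^2)^2|\omega|^2$. I then split into cases on $\Sigma\setminus S_\infty(g)$. If $\hat\omega(p)\neq 0$ and $|g(p)|\neq 1$, then $X_u, X_v$ are nonzero, mutually orthogonal, and both spacelike, hence linearly independent, giving rank~$2$. If $\hat\omega(p)\neq 0$ and $|g(p)|=1$, then $V(g(p))\neq 0$ (its first component $-2g$ is nonzero) but $V(g(p))$ is null; $X_u, X_v$ are then mutually orthogonal nonzero null vectors in $L^3$, which must be parallel, so the rank is $1$. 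Finally, $\hat\omega(p)=0$ forces $X_z(p)=0$ and rank~$0$, which in particular gives the ``$\operatorname{rank}(dX)_p=0$'' assertion stated in (i).

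For part (ii), near a pole $p\in S_\infty(g)$ I would use the chart $\tilde g:=1/g$ (with $\tilde g(p)=0$) on $\hat\C$, in which the integrand of \eqref{eq:AN-rep} becomes $\tilde g^{-2}(-2\tilde g,\,1+\tilde g^2,\,i(\tilde g^2-1))\hat\omega$; well-definedness of $X$ at $p$ therefore forces $g^2\hat\omega$ to extend smoothly across $S_\infty(g)$, and then
\[
  X_z=\frac{g^2\hat\omega}{H}\bigl(-2\tilde g,\;1+\tilde g^2,\;i(\tilde g^2-1)\bigr).
\]
At $\tilde g=0$ the parenthesised vector equals $(0,1,-i)$, whose real and imaginary parts $(0,1,0)$ and $(0,0,-1)$ are $\R$-linearly independent. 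Hence $X_z(p)=0$ iff $(g^2\omega)(p)=0$: when this vanishes the rank is $0$, otherwise it is $2$. The final ``in particular'' then follows by combining (i) and (ii), since $\operatorname{rank}(dX)_p=1$ can only occur in the sub-case $|g(p)|=1$ of (i).

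The main subtle point I foresee is the rank-$1$ conclusion at $|g(p)|=1$: it rests on the Lorentzian fact that two mutually orthogonal null vectors in $L^3$ are parallel, i.e.\ no two distinct null lines in $L^3$ are orthogonal. Everything else is either a direct calculation with the model vector $V(g)$ or the change-of-chart argument above, which simply transports the analysis of (i) to the point $\tilde g=0$.
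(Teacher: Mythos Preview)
The paper does not give its own proof of this statement; it is recorded as a \emph{Fact} with a citation to \cite[Proposition 3.8]{Umeda} and used without further argument. So there is nothing in the paper to compare your approach against.

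Your proof is essentially correct and follows the natural route via the representation formula. One small point deserves tightening. In the case $\hat\omega(p)\neq 0$, $|g(p)|=1$ you assert that $X_u(p)$ and $X_v(p)$ are both nonzero null vectors; this need not hold. Writing $g(p)=e^{i\theta}$, a direct computation gives
\[
  \Re V(g(p))=2\cos\theta\,(-1,\cos\theta,\sin\theta),\qquad
  \Im V(g(p))=2\sin\theta\,(-1,\cos\theta,\sin\theta),
\]
so one of $\Re X_z(p),\Im X_z(p)$ (hence one of $X_u(p),X_v(p)$) can vanish. However, this same computation already shows that $\Re V(g(p))$ and $\Im V(g(p))$ are proportional to the common null vector $(-1,\cos\theta,\sin\theta)$, so the $\R$-span of $X_u(p),X_v(p)$ is one-dimensional in every case; alternatively, your ``orthogonal null vectors are parallel'' argument covers the case where both are nonzero, and when one vanishes the rank-$1$ conclusion is immediate from $X_z(p)\neq 0$. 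The remainder of your argument, including the pole analysis in (ii) via $\tilde g=1/g$ and the vector $(0,1,-i)$, is fine.
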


From the proof of \cite[Theorem 4.1]{Umeda}
and \eqref{eq:bar-z},
the following lemma can be proved easily.

\begin{lemma}\label{lem:nondeg-g}
Let $X : \Sigma \rightarrow L^3$
be a generalized spacelike CMC surface with the Gauss map $g$.
A singular point $p\in S_1(g)$ is non-degenerate
if and only if $dg(p)\neq0$.
In particular, $g$ gives a local diffeomorphism 
on a neighborhood of $p$.
\end{lemma}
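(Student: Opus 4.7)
The plan is to compute the signed area density $\lambda$ explicitly from the Akutagawa--Nishikawa representation \eqref{eq:AN-rep}, show that it factors as $(1-|g|^2)$ times a nowhere-vanishing (near $p$) function, and then apply \eqref{eq:bar-z}. Set $\tilde\nu:=(1+|g|^2,\,2\Re g,\,2\Im g)$; this is everywhere nonzero in $\R^3$, and a direct computation from \eqref{eq:AN-rep} verifies $\tilde\nu\cdot X_u = \tilde\nu\cdot X_v = 0$ (Euclidean inner product), so $\vect{n}_0:=\tilde\nu/|\tilde\nu|_E$ is a smooth Euclidean unit normal extending across $S_1(g)$, and $X$ is a frontal in the sense of Section \ref{sec:prelim}.

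Writing $X_z = H^{-1}\phi$ with $\phi=(-2g,\,1+g^2,\,i(1-g^2))\hat\omega$, the identity $X_u\times X_v = -2i\,X_z\times X_{\bar z} = -(2i/H^2)\,\phi\times\bar\phi$ combined with the algebraic simplification
\[
\phi\times\bar\phi = -2i\,(1-|g|^2)|\hat\omega|^2\,\tilde\nu
\]
yields
\[
\lambda=\det(X_u,X_v,\vect{n}_0)=-\frac{4}{H^2}(1-|g|^2)|\hat\omega|^2|\tilde\nu|_E.
\]
At $p\in S_1(g)$, $(1-|g|^2)(p)=0$ and $|\tilde\nu(p)|_E=2\sqrt{2}$. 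Since non-degeneracy forces $\mathrm{rank}(dX)_p=1$, whence $\hat\omega(p)\neq 0$ by Fact \ref{fact:geneCMC-sing}, the product rule gives
\[
d\lambda(p)=-\frac{4}{H^2}|\hat\omega(p)|^2|\tilde\nu(p)|_E\cdot d(1-|g|^2)(p),
\]
so $d\lambda(p)\neq 0$ iff $d(|g|^2)(p)\neq 0$. Using $g_{\bar z}(p)=0$ from \eqref{eq:bar-z}, the exterior derivative $d(|g|^2)(p)$ reduces to $\bar g(p)g_z(p)\,dz+g(p)\overline{g_z(p)}\,d\bar z$, which vanishes iff $g_z(p)=0$, iff $dg(p)=0$ (the $d\bar z$-component of $dg(p)$ being already zero by \eqref{eq:bar-z}); this proves the stated equivalence.

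For the local diffeomorphism conclusion, the real Jacobian of $g$ viewed as a map $\R^2\to\R^2$ equals $|g_z|^2-|g_{\bar z}|^2$, which at $p$ reduces to $|g_z(p)|^2$ by \eqref{eq:bar-z}, and is therefore nonzero when $dg(p)\neq 0$; the inverse function theorem then gives a local diffeomorphism near $p$. The main technical obstacle is the clean algebraic factorization of $\phi\times\bar\phi$ into $-2i(1-|g|^2)|\hat\omega|^2\tilde\nu$; the computation requires careful expansion of products like $(1+g^2)(1-\bar g^2)$ and systematic use of $g\pm\bar g=2\Re g,\,2i\Im g$ and $g\bar g=|g|^2$, but is otherwise routine.
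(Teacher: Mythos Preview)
Your explicit factorization of the signed area density is correct and considerably more detailed than the paper, which simply defers to \cite[Theorem~4.1]{Umeda} together with \eqref{eq:bar-z}. The identity $\phi\times\bar\phi=-2i(1-|g|^2)|\hat\omega|^2\tilde\nu$ checks out, the forward implication (non-degenerate $\Rightarrow dg(p)\neq 0$) is fully justified, and the local-diffeomorphism clause follows immediately from the Jacobian computation.

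There is, however, a gap in the converse direction. The biconditional ``$d\lambda(p)\neq 0$ iff $d(|g|^2)(p)\neq 0$'' is valid only once you know $\hat\omega(p)\neq 0$, and you have obtained that hypothesis \emph{from} non-degeneracy via Fact~\ref{fact:geneCMC-sing}. To pass from $dg(p)\neq 0$ back to $d\lambda(p)\neq 0$ you would need an independent reason why $\hat\omega(p)\neq 0$; but $\hat\omega=\overline{g_{\bar z}}/(1-|g|^2)^2$ is governed by $g_{\bar z}$ rather than $g_z$, and Definition~\ref{def:geneCMC} only demands a $C^1$ extension of $\omega$ across $S_1(g)$, which does not by itself rule out $\hat\omega(p)=0$. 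If $\hat\omega(p)=0$ while $g_z(p)\neq 0$, your own formula gives $d\lambda(p)=0$, so the point is degenerate and the claimed equivalence fails. Thus ``this proves the stated equivalence'' overstates what has been shown: you have one implication, not both. A natural tool for the missing direction is the Hopf differential coefficient $q=-\hat\omega\,g_z$, which extends holomorphically across $S_1(g)$ by Codazzi; when $q(p)\neq 0$ one gets $\hat\omega(p)\neq 0$ for free, but the argument as written does not invoke this, and the case $q(p)=0$ would still need separate attention.
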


\subsection{Proof of Theorem \ref{thm:fold}}

First, we review the definition of fold singular points.

\begin{definition}[Fold singular point]
\label{def:fold}
Let $\Sigma$ be a smooth $2$-manifold
and $X : \Sigma\rightarrow L^3$ a smooth map.
A singular point $p\in \Sigma$ of $X$
is called {\it fold\/} if 
there exist a local coordinate system 
$(U; \varphi)$ around $p\in \Sigma$ 
and a diffeomorphism $\Phi$ of $L^3$
such that $\Phi \circ X \circ \varphi^{-1} = X_{\rm fold}$,
where
$
  X_{\rm fold}(u,v)=(u,\,v^2,\,0).
$
\end{definition}

We use the following fact.

\begin{fact}
For a spacelike immersion 
$X : \Sigma \rightarrow L^3$ 
of an oriented smooth $2$-manifold $\Sigma$,
let $\nu : \Sigma \rightarrow H^2 \subset L^3$ 
be a unit normal vector field,
and $H$ be the mean curvature function.
Then it holds that
\begin{equation}\label{eq:MC-Laplacian}
  \Delta_{ds^2} X = -2H \nu, 
\end{equation}
where $\Delta_{ds^2}$ is the Laplacian of 
the first fundamental form $ds^2$.
\end{fact}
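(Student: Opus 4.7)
The plan is to work in local conformal coordinates and reduce the identity to the standard decomposition of the second fundamental form already recorded in the excerpt. Let $(U, z = u+iv)$ be a conformal coordinate system on $(\Sigma, ds^2)$, so that $ds^2 = e^{2\sigma}\,dz\,d\bar z$ and
\[
  \Delta_{ds^2} = 4 e^{-2\sigma}\,\partial_z \partial_{\bar z}
  = e^{-2\sigma}\bigl(\partial_u^2 + \partial_v^2\bigr).
\]
Applied componentwise, this gives $\Delta_{ds^2} X = 4 e^{-2\sigma}\, X_{z\bar z}$, so the whole statement reduces to computing $X_{z\bar z}$.

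Next, I would verify that $X_{z\bar z}$ is purely normal. Conformality yields $\langle X_z, X_z\rangle = 0$ and $\langle X_z, X_{\bar z}\rangle = \tfrac{1}{2}e^{2\sigma}$. Differentiating the first identity with respect to $\bar z$ gives $\langle X_{z\bar z}, X_z\rangle = 0$, and its conjugate gives $\langle X_{z\bar z}, X_{\bar z}\rangle = 0$. Since $\{X_z, X_{\bar z}\}$ spans the complexified tangent plane, $X_{z\bar z}$ must be proportional to the unit normal $\nu$. Because $\langle \nu, \nu\rangle = -1$ (here is the only place the Lorentzian signature enters), we may write
\[
  X_{z\bar z} = -\langle X_{z\bar z}, \nu\rangle \,\nu.
\]

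The remaining step is to identify $\langle X_{z\bar z}, \nu\rangle$ with a multiple of $H$ via the Hopf-type decomposition $I\!I = Q + \bar Q + H\,ds^2$ recalled in the preliminaries. Differentiating $\langle \nu, X_{\bar z}\rangle = 0$ in $z$ yields
\[
  \langle \nu, X_{z\bar z}\rangle = -\langle \nu_z, X_{\bar z}\rangle = I\!I(X_z, X_{\bar z}),
\]
and evaluating the stated decomposition on $(X_z, X_{\bar z})$ (noting $Q(X_z,X_{\bar z}) = \bar Q(X_z, X_{\bar z}) = 0$ because $Q$ is of type $(2,0)$) gives $I\!I(X_z, X_{\bar z}) = H\,\langle X_z, X_{\bar z}\rangle = \tfrac{1}{2}H e^{2\sigma}$.

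Combining the three displays,
\[
  \Delta_{ds^2} X = 4 e^{-2\sigma} X_{z\bar z}
  = -4 e^{-2\sigma}\cdot \tfrac{1}{2}H e^{2\sigma}\,\nu = -2H\nu,
\]
which is the claim; since the left-hand side is coordinate-independent, the local computation suffices globally. The only real pitfall is sign bookkeeping: the factor $-2$ (rather than $+2$) comes precisely from $\langle \nu, \nu\rangle = -1$, so the main care required is to track this one sign through the normal component of $X_{z\bar z}$.
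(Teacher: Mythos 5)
Your argument is correct, and the key sign is handled properly: the factor $-2H$ (rather than $+2H$) indeed comes from $\inner{\nu}{\nu}=-1$ when you extract the normal component of $X_{z\bar z}$. The paper states this as a Fact without proof, so there is nothing to compare against; your computation ($\Delta_{ds^2}=4e^{-2\sigma}\partial_z\partial_{\bar z}$, orthogonality of $X_{z\bar z}$ to the tangent plane from differentiating $\inner{X_z}{X_z}=0$, and identification of $\inner{X_{z\bar z}}{\nu}=\tfrac12 He^{2\sigma}$ via the decomposition $I\!I=Q+\bar Q+H\,ds^2$) is the standard one and is consistent with the conventions set up in the preliminaries.
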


\begin{proof}[{Proof of Theorem \ref{thm:fold}}]
Let $X : \Sigma \rightarrow L^3$
be a generalized spacelike CMC surface with the Gauss map $g$.
Using a suitable homothety,
we may assume that its mean curvature $H$ is $1/2$
without loss of generality.

Assume $p\in \Sigma$ is a fold singular point of $X$.
Then, there exists a local coordinate system
$(U ; u,v)$ around $p$
such that 
$
  X(u,v)=X(u,-v)
$
holds for any $(u,v)\in U$.
Then, we have that
$X_v(u,v) = -X_v(u,-v)$,
and hence $X_v(u,0)=0$ holds.
The singular point set $S(X)$ is given by
$S(X)=\{(u,v) \in U \,;\, v=0\}$.
That is, $\gamma(t)=(t,0)$
gives a singular curve
and $\eta=\partial_v$ gives a null vector field of $X$. 

Since fold singular points are non-degenerate,
$p=(0,0)$ is also non-degenerate and ${\rm rank} (dX)_p=1$ holds.
By Fact \ref{fact:geneCMC-sing}
and Lemma \ref{lem:nondeg-g},
we have $S(X)=\{(u,0)\}\subset S_1(g)$ 
and $g$ gives a local diffeomorphism around $p=(0,0)$.
Hence, if $|g(u_0,v_0)|>1$ holds at a point $(u_0,v_0)\in U_+$,
then $|g(u_0,-v_0)|<1$ holds at $(u_0,-v_0)\in U_-$,
where $U_{\pm}:= \{(u,v)\in U \,;\, v\gtrless 0 \}$.
Since the unit normal $\nu$ of $X$
is given by $\nu=\pi^{-1}\circ g$, i.e.,
\[
  \nu=\frac{1}{|g|^2-1}\left( |g|^2+1, -2\Re g, -2\Im g \right),
\]
we have $\nu(u_0,v_0)\in H^2_+$ and $\nu(u_0,-v_0)\in H^2_-$.

However, by Fact \ref{eq:MC-Laplacian},
it holds that $\nu=-\Delta_{ds^2} X$
on the regular point set $U_+\cup U_-$.
Since 
$
  (\Delta_{ds^2} X) (u,v) = (\Delta_{ds^2} X) (u,-v)
$
holds on $U_+\cup U_-$,
we have $\nu(u_0,v_0)=\nu(u_0,-v_0)\in H^2_+$,
which is a contradiction.
\end{proof}

\begin{remark}
In the above proof of Theorem \ref{thm:fold},
we used the disconnectedness of $H^2$
where the unit normal vector field $\nu$ of a spacelike surface takes values.
In the case of timelike surfaces,
the unit normal vector fields take values in the de Sitter plane 
$S^2_1:=\{p\in L^3\,;\, \inner{p}{p}=1\}$, which is connected.
Hence, 
a proof similar to that of Theorem \ref{thm:fold}
can not be applied directly to 
the timelike case.
\end{remark}

\section{$(2,5)$-cuspidal edge}
\label{sec:criterion}

In this section, we shall prove Theorem \ref{thm:2-5}.
For the proof, we give a criterion for $(2,5)$-cuspidal edges
(Theorem \ref{thm:criterion})
and review the classification of the conjugates of spacelike Delaunay surfaces
(Fact \ref{fact:conjugate-T}, \ref{fact:conjugate-S}, \ref{fact:conjugate-L}).

\subsection{Criterion for $(2,5)$-cuspidal edges}

Let $X : \Sigma\rightarrow \R^3$ be a smooth map
defined on a smooth $2$-manifold $\Sigma$.
A singular point $p\in \Sigma$ of $X$
is called {\it $(2,5)$-cuspidal edge\/} if 
there exist a local coordinate system 
$(U; \varphi)$ around $p\in \Sigma$ 
and a diffeomorphism $\Phi$ of $\R^3$
such that $\Phi \circ X \circ \varphi^{-1} = X_{(2,5)}$,
where
$
  X_{(2,5)}(u,v)=(u,\,v^2,\,v^5)
$
which is called the {\it standard $(2,5)$-cuspidal edge\/}.

\begin{theorem}[A criterion for $(2,5)$-cuspidal edges]
\label{thm:criterion}
Let $U$ be a domain of $\R^2$,
$X : U \rightarrow \R^3$ a frontal,
and $p \in U$ a non-degenerate singular point of the first kind.
Moreover, let
$\gamma(t)$ $(|t|<\varepsilon)$ 
be a singular curve passing through $p=\gamma(0)$
and $\eta(t)$ a null vector field along $\gamma$.
Take smooth vector fields $\xi=\xi(u,v)$ and $\eta=\eta(u,v)$ on $U$
which are extensions of $\gamma'(t)$ and $\eta(t)$, respectively.
Then, $p=\gamma(0)$ is a $(2,5)$-cuspidal edge 
if and only if 
\begin{gather}
  \det(\xi X,\, \eta \eta X,\, \eta \eta \eta X)(\gamma(t))=0
  \qquad (\text{for each} ~ |t|<\varepsilon), \quad and  
  \label{eq:condition-3}\\
  \det(\xi X,\, \tilde\eta \tilde\eta X,\, 3\tilde\eta^5 X - 10 C \tilde\eta^4 X)(p)\neq0
  \label{eq:condition-4}
\end{gather}
hold, where $\eta^k X$ implies $k$-times derivative $\eta \cdots \eta X$, 
$\tilde{\eta}$ is a special null vector field satisfying
\begin{equation}\label{eq:special-null}
  (\xi X \cdot \tilde\eta^2 X)(p)=
  (\xi X \cdot \tilde\eta^3 X)(p)=0
\end{equation}
{\rm (}the dot ``\,$\cdot$\,'' means the Euclidean inner product\/{\rm )},
and $C$ is a constant such that
\begin{equation}\label{eq:constant-C}
  \tilde\eta^3 X(p) = C \tilde\eta^2 X(p)
\end{equation}
holds.
\end{theorem}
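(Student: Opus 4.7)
The plan is to reduce $X$ to the normal form of Fact \ref{fact:sing-form}, rewrite both criterion conditions purely in terms of the resulting function $h$, and then in the sufficient direction use Facts \ref{fact:division} and \ref{fact:splitting} together with two explicit target diffeomorphisms to bring $X$ into the standard form $X_{(2,5)}$.

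First I would apply Fact \ref{fact:sing-form} to assume $X(u,v) = (u,\,v^2,\,v^3 h(u,v))$, with $\gamma(t) = (t,0)$, $\xi = \partial_u$ and $\eta = \partial_v$. A direct computation at $v = 0$ yields $\xi X = (1,0,0)$, $\eta^2 X = (0,2,0)$ and $\eta^3 X = (0,0,6h(t,0))$, so
\[
  \det(\xi X,\,\eta^2 X,\,\eta^3 X)(\gamma(t)) = 12\,h(t,0).
\]
Condition \eqref{eq:condition-3} is therefore equivalent to $h(t,0) \equiv 0$, and Fact \ref{fact:division} then gives $h(u,v) = v\,\tilde h(u,v)$, reducing $X$ to $(u,\,v^2,\,v^4\tilde h(u,v))$. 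In this reduced form $\tilde\eta = \partial_v$ already satisfies \eqref{eq:special-null}, because $\tilde\eta^2 X(p)=(0,2,0)$ and $\tilde\eta^3 X(p)=(0,0,0)$ are both orthogonal to $\xi X(p)=(1,0,0)$, and \eqref{eq:constant-C} then forces $C=0$. Computing further, $\tilde\eta^4 X(p)=(0,0,24\,\tilde h(0,0))$ and $\tilde\eta^5 X(p)=(0,0,120\,\tilde h_v(0,0))$, so
\[
  \det\bigl(\xi X,\,\tilde\eta^2 X,\,3\tilde\eta^5 X - 10\,C\,\tilde\eta^4 X\bigr)(p) = 720\,\tilde h_v(0,0),
\]
and condition \eqref{eq:condition-4} becomes exactly $\tilde h_v(0,0) \neq 0$.

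For the sufficient direction, I would then apply Fact \ref{fact:splitting} to write $\tilde h(u,v) = \alpha(u,v^2) + v\beta(u,v^2)$, noting $\beta(0,0) = \tilde h_v(0,0)\neq 0$, so that $X(u,v) = (u,\,v^2,\,v^4\alpha(u,v^2) + v^5\beta(u,v^2))$. The target diffeomorphism
\[
  \Psi(Y_1,Y_2,Y_3) = \Bigl(Y_1,\,Y_2,\,\tfrac{Y_3 - Y_2^2\,\alpha(Y_1,Y_2)}{\beta(Y_1,Y_2)}\Bigr),
\]
well defined near the origin since $\beta(0,0)\neq 0$, then sends $X$ to $(u,v^2,v^5) = X_{(2,5)}$, establishing sufficiency.

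For the necessary direction, a direct computation shows that the model $X_{(2,5)}$ itself satisfies both \eqref{eq:condition-3} (its $X_{vvv}$ vanishes on $\{v=0\}$) and \eqref{eq:condition-4} (the resulting determinant equals $720$), and pulling these back along the defining $\R$-equivalence yields the two conditions for $X$. The main obstacle I expect is verifying that both conditions are genuinely intrinsic, i.e.\ independent of the extensions of $\xi$ and $\eta$ off $\gamma$ and of the particular special null vector field $\tilde\eta$ meeting \eqref{eq:special-null}. The specific coefficients in $3\tilde\eta^5 X - 10\,C\,\tilde\eta^4 X$ are designed precisely to cancel the indeterminacy that arises under a rescaling $\tilde\eta \mapsto c\,\tilde\eta$ (which sends $C \mapsto cC$ and multiplies the combination by $c^5$) and under modifications of $\tilde\eta$ by components tangent to the singular curve; checking this invariance carefully forms the technical heart of the argument.
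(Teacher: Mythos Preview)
Your proposal is correct and follows essentially the same route as the paper. Both arguments reduce via Fact~\ref{fact:sing-form} to the form $X=(u,v^2,v^3 h)$, translate condition~\eqref{eq:condition-3} into $h(u,0)\equiv 0$ and condition~\eqref{eq:condition-4} into $\beta(0,0)\neq 0$ after the Division/Splitting decomposition, and finish with target diffeomorphisms; the paper applies $\Phi_1$ to strip the $v^4\alpha$ term before checking the conditions and then applies $\Phi_2$, whereas you check the conditions first and merge both steps into a single $\Psi$, but this is only a cosmetic reordering. For the necessary direction the paper likewise relies entirely on the invariance statement (its Lemma~\ref{lem:indep}, proved in the appendix), which is exactly the ``technical heart'' you identify.
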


By the following lemma,
we have the existence of a null vector field 
satisfying \eqref{eq:special-null}.

\begin{lemma}
\label{lem:special-null-vf}
For a singular point $p$ of the first kind,
there exists a null direction $\tilde\eta$
satisfying the condition \eqref{eq:special-null}.
\end{lemma}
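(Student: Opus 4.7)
The plan is to parametrize all null vector fields at $p$ by a scalar function on the source and then use two Taylor coefficients of this function to arrange the two scalar conditions in \eqref{eq:special-null} one after the other.

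First I would choose local coordinates $(u,v)$ centered at $p$ adapted to the first-kind structure. Since $\xi(p)$ and $\eta(p)$ are linearly independent, one can arrange $\gamma(t)=(t,0)$, $\xi=\partial_u$, and $\eta|_{v=0}=\partial_v$. In these coordinates the null condition along the singular curve reads $X_v(u,0)\equiv 0$, and differentiating in $u$ gives $X_{uv}(u,0)\equiv 0$. The first-kind hypothesis moreover forces ${\rm rank}\,(dX)_p=1$ with image $\R\,X_u(p)$, so $X_u(p)\neq 0$ and the Euclidean norm $|X_u(p)|^2$ is strictly positive.

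Next, any null vector field near $\gamma$ has the form $\tilde\eta=\partial_v+f(u,v)\,\partial_u$ with $f(u,0)\equiv 0$; in particular $f(p)=f_u(p)=0$. A direct calculation, in which the terms killed by $f(p)=0$ or $X_{uv}(p)=0$ drop out, yields
\begin{align*}
\tilde\eta^2 X(p) &= X_{vv}(p)+f_v(p)\,X_u(p),\\
\tilde\eta^3 X(p) &= X_{vvv}(p)+f_{vv}(p)\,X_u(p).
\end{align*}
Taking the Euclidean inner product with $\xi X(p)=X_u(p)$, the two conditions in \eqref{eq:special-null} decouple into the triangular affine system
\[
 f_v(p)\,|X_u(p)|^2=-X_u(p)\cdot X_{vv}(p),\qquad
 f_{vv}(p)\,|X_u(p)|^2=-X_u(p)\cdot X_{vvv}(p),
\]
each with nonzero leading coefficient $|X_u(p)|^2$, so it has a unique solution $(f_v(p),f_{vv}(p))$. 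Choosing $f(u,v)=f_v(p)\,v+\tfrac{1}{2}f_{vv}(p)\,v^2$ then produces the desired null vector field $\tilde\eta$.

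I do not expect a serious obstacle here: the only nontrivial input is $X_u(p)\neq 0$, which is precisely the content of the first-kind hypothesis combined with ${\rm rank}\,(dX)_p=1$; everything else is routine differentiation and one-variable linear algebra. The main thing to be careful about is the bookkeeping when expanding $\tilde\eta^k X$, but since all obstructing cross-terms vanish at $p$ thanks to $f(p)=f_u(p)=0$ and $X_{uv}(p)=0$, the formulas collapse to the clean expressions above.
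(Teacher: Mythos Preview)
Your proof is correct and follows essentially the same approach as the paper: choose adapted coordinates in which the singular curve is a coordinate axis and the null direction along it is the transverse coordinate vector field, then perturb the null direction by a function vanishing on the singular curve and solve for its first two normal Taylor coefficients. The paper (with the roles of $u$ and $v$ swapped) writes $\tilde\eta=\partial_u+(au+bu^2)\partial_v$ and arrives at the same formulas for $a$, $b$, modulo your explicit use of $X_{uv}(p)=0$, which makes the $\tilde\eta^3X(p)$ expression a bit cleaner.
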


\begin{proof}
Since $p$ is of the first kind,
we may take a coordinate system $(u,v)$ centered at $p$ such that 
$S(X)=\{(u,v) \,;\, u=0\}$ and $dX(\partial_u)=0$ hold.
In this situation, $\xi=\partial_v$ holds.
Then, the vector field
$
  \tilde\eta=\partial_u+ \left( a u+bu^2 \right) \partial_v
$
is the desired null vector field,
where 
\[
  a=-\frac{X_v \cdot X_{uu}}{X_v \cdot X_v}(0,0),\qquad
  b=-\frac{X_v \cdot (X_{uuu}+3a X_{uv})}{2 X_v \cdot X_v}(0,0).
\]
\end{proof}

We shall prove Lemma \ref{lem:indep} in Appendix \ref{app:dependence}.

\begin{lemma}\label{lem:indep}
The conditions \eqref{eq:condition-3} and \eqref{eq:condition-4}
in Theorem \ref{thm:criterion}
are independent of choices of vector fields $\xi$, $\eta$
and coordinate systems of $\R^3$.
\end{lemma}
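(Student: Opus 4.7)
The plan is to prove invariance in three stages: (a) under the extensions of $\xi$ and $\eta$ off $\gamma$ (which absorbs source-coordinate ambiguity); (b) under rescalings of the null vector field on $\gamma$, which induce rescalings of the special field $\tilde\eta$ and of the constant $C$; and (c) under diffeomorphisms $\Phi$ of the ambient $\R^3$.

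Stages (a) and (b) reduce to Leibniz-rule computations that exploit $\eta X|_\gamma = 0$. Any two admissible extensions of $\xi$ coincide along $\gamma$, so $\xi X|_\gamma$ is unchanged. For the null field, write $\eta' = c'\eta + w$ with $c'|_\gamma$ a nonvanishing function $c$ on $\gamma$ and $w|_\gamma = 0$. A direct expansion shows that on $\gamma$,
\[
 \eta'^2 X \equiv c^2 \eta^2 X, \qquad \eta'^3 X \equiv c^3 \eta^3 X + 3 c^2 \eta(c)\, \eta^2 X \pmod{\xi X},
\]
so the determinant in \eqref{eq:condition-3} rescales by $c^5$ (the $\xi X$ corrections drop out as repeated columns, and the $\eta^2 X$ correction by parallelism). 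Continuing the expansion to $\eta'^4 X$ and $\eta'^5 X$ at $p$ via Bell polynomials and using $\eta^3 X = C\eta^2 X$ at $p$, one finds that the constant transforms by $C' = c(p)C + 3\eta(c)(p)$ and that $3\eta'^5 X - 10 C'\eta'^4 X$ equals $c(p)^5 (3\eta^5 X - 10 C\eta^4 X)$ modulo $\eta^2 X$; the determinant in \eqref{eq:condition-4} therefore rescales by $c(p)^7 \neq 0$.

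Stage (c) is the crux. Setting $\tilde X = \Phi\circ X$, Fa\`a di Bruno's formula combined with $\eta X|_p = 0$ reduces the sum over set partitions of $\{1,\dots,k\}$ to those whose blocks all have size $\geq 2$:
\begin{gather*}
 \tilde\eta^k \tilde X = D\Phi(X(p))\cdot \eta^k X \quad (k=1,2,3),\\
 \tilde\eta^4 \tilde X = D\Phi\cdot \eta^4 X + 3\, D^2\Phi[\eta^2 X,\, \eta^2 X],\\
 \tilde\eta^5 \tilde X = D\Phi\cdot \eta^5 X + 10\, D^2\Phi[\eta^3 X,\, \eta^2 X].
\end{gather*}
The coefficients $3$ and $10$ count the partitions of type $(2,2)$ of $\{1,\dots,4\}$ and $(3,2)$ of $\{1,\dots,5\}$, respectively. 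Since $\tilde\eta^3 \tilde X = C\, D\Phi\cdot\eta^2 X = C\,\tilde\eta^2 \tilde X$ at $p$, the constant is preserved, i.e.\ $\tilde C = C$. Substituting and using the contact identity $\eta^3 X - C \eta^2 X = 0$ at $p$ produces the cancellation
\begin{align*}
 3\tilde\eta^5 \tilde X - 10\,\tilde C\,\tilde\eta^4 \tilde X\big|_p
  &= D\Phi\bigl[3\eta^5 X - 10 C\,\eta^4 X\bigr] + 30\, D^2\Phi\bigl[\eta^3 X - C\eta^2 X,\, \eta^2 X\bigr]\\
  &= D\Phi\bigl[3\eta^5 X - 10 C\,\eta^4 X\bigr].
\end{align*}
Both determinants in \eqref{eq:condition-3} and \eqref{eq:condition-4} therefore pick up only the factor $\det D\Phi(X(p))\neq 0$, so the vanishing and non-vanishing conditions are preserved.

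The main obstacle is the combinatorial bookkeeping of stage (c): the whole argument hinges on the match between the partition counts $3$ and $10$ and the coefficients in \eqref{eq:condition-4}, which is precisely what allows the $D^2\Phi$-correction to telescope via $\eta^3 X = C\eta^2 X$. This also explains why the combination $3\eta^5 X - 10 C\eta^4 X$ is essentially the unique scalar combination of $\eta^4 X$ and $\eta^5 X$ at $p$ that behaves covariantly under arbitrary diffeomorphisms of $\R^3$.
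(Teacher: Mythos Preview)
Your argument is correct and follows essentially the same route as the paper's proof in the appendix: the paper splits into Step~I (independence from $\xi,\eta$) and Step~II (independence from target diffeomorphisms), matching your stages (a)+(b) and (c), with the same Leibniz/Fa\`a di Bruno computations producing the coefficients $3$ and $10$ and the same cancellation $\eta^3 X - C\eta^2 X = 0$ killing the $D^2\Phi$-term. Your added remark that this cancellation singles out $3\eta^5 X - 10C\eta^4 X$ as the unique diffeomorphism-covariant combination is a nice conceptual point not made explicit in the paper.
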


\begin{proof}[Proof of Theorem \ref{thm:criterion}]
By Lemma \ref{lem:indep}, we have that
a $(2,5)$-cuspidal edge satisfies 
the conditions \eqref{eq:condition-3} and \eqref{eq:condition-4}.
Thus, we here prove the converse.

By Fact \ref{fact:sing-form},
we may write $X(u,v)$ as
$
  X(u,v) = (u,v^2,v^3 h (u,v)).
$
First, we set
\[
  h_0(u):=h(u,0),\qquad
  h_1(u,v):=h(u,v) - h_0(u).
\]
Since $h_1(u,0)=h(u,0) - h_0(u)=0$,
by Fact \ref{fact:division},
there exists a smooth function $\tilde{h}_1(u,v)$ 
defined around the origin
such that $h_1(u,v)=v\tilde{h}_1(u,v)$ holds.
Hence we have
$
  h(u,v) = h_0(u) + v\tilde{h}_1(u,v).
$
Applying Fact \ref{fact:splitting} to
$\tilde{h}_1(u,v)$,
we have $\tilde{h}_1(u,v)=\alpha(u,v^2)+v\beta(u,v^2)$,
and hence
\[
  X(u,v) = \left(u,v^2,v^3 h_0(u) + v^4\alpha(u,v^2)+v^5\beta(u,v^2) \right).
\]
Using the diffeomorphism
$
  \Phi_1 : (x,y,z) \mapsto (x,y,z - y^2 \alpha(x,y) ),
$
we have
\[
  \Phi_1\circ X(u,v) = \left(u,v^2,v^3 h_0(u) +v^5\beta(u,v^2) \right).
\]
Replace $X(u,v)$ by $\Phi_1\circ X(u,v)$.
Then the singular point set of $X$ is $\{(u,0)\}$
and $X_v=0$.
By Lemma \ref{lem:indep}, 
the conditions \eqref{eq:condition-3}, \eqref{eq:condition-4} 
are independent of the choice of vector fields $(\xi,\eta)$. 
Thus we may put $\xi=\partial_u$, $\eta=\partial_v$. Then,
\begin{equation*}
  \xi X(u,0) =(1,0,0),\quad
  \eta^2 X(u,0) =(0,1,0),\quad
  \eta^3 X(u,0) =(0,0,6h_0(u))
\end{equation*}
holds.
By the condition \eqref{eq:condition-3}, we have
$
 h_0(u)=0.
$
Moreover, 
$\partial_v$ also satisfies \eqref{eq:special-null},
and hence we put $\tilde{\eta}=\partial_v$.
Then, $C=0$ holds, where $C$ 
is the constant as in \eqref{eq:constant-C}.
By the condition \eqref{eq:condition-4}, we have
$
  \beta(0,0)\neq0.
$
Therefore, the map
\[
  \Phi_2 : (x,y,z) \longmapsto (x,y,z/\beta(x,y) )
\]
gives a local diffeomorphism
of $\R^3$ around the origin.
Replacing $X(u,v)$ by $\Phi_2\circ X(u,v)$,
we have
$
  X(u,v) = \left(u,v^2,v^5\right).
$
\end{proof}

\subsection{Conjugate of spacelike Delaunay surfaces}

A generalized spacelike CMC surface is called {\it spacelike Delaunay\/} with axis $\ell$
if it is invariant under the action of the group of motions in $L^3$
which fixes each point of the line $\ell$.
Spacelike Delaunay surfaces are classified in
\cite{HanoNomizu}, \cite{IshiharaHara}, \cite{Sasahara} (see also \cite{H_Delaunay}).
The following fact gives those surfaces with non-empty singular point set.

\begin{fact}
\label{fact:Delaunay}
Let $ X : \Sigma\rightarrow L^3$ be a
spacelike Delaunay surface of mean curvature $H$
such that the singular point set of $X$ is not empty.
If the axis of $X$ is 
\begin{itemize}
\item[$\text{(I)}$]
timelike, there exists a constant $k \,(\neq1)$ 
such that $X$ is congruent to 
\begin{equation}\label{eq:t-Delaunay}
  X(r, t) = \frac1{2H}\left(
    \int_{0}^r \frac{\tau^2+k-1}{\sqrt{\delta(\tau)}} d\tau,\, 
    r \cos (2Ht),\, r \sin (2Ht) 
  \right),
\end{equation}
where 
\begin{equation}\label{eq:t-Delaunay2}
  \delta(r) = \left(r^2+k+1\right)^2-4 k.
\end{equation}
\item[$\text{(II)}$]
spacelike, there exists a constant $k \,(\neq1)$ 
such that $X$ is congruent to 
\begin{equation}\label{eq:s-Delaunay}
  X(r, t) = \frac1{2H}\left(
    r \cosh (2Ht),\, r \sinh (2Ht),\, 
    \int_{0}^r \frac{\tau^2 -k+1}{\sqrt{\delta(\tau)}} d\tau
  \right),
\end{equation}
where 
\begin{equation}\label{eq:s-Delaunay2}
  \delta(r) = \left(r^2-k-1\right)^2-4 k.
\end{equation}
\item[$\text{(III)}$]
lightlike, $X$ is congruent to 
\begin{equation}\label{eq:l-Delaunay}
  X(r, t) = \left(\zeta (r)-r\left(1+ \frac{t^2}{4}\right),\, - r t ,\, \zeta (r)+r\left(1- \frac{t^2}{4}\right)\right),
\end{equation}
where $\zeta(r)$ is one of the followings:
\begin{align}
  \label{eq:l-Delaunay-1}
  \zeta(r) &= \frac1{8H^2} \left(-\frac{r }{1+r^2}+\tan ^{-1}r\right),\\
  \label{eq:l-Delaunay-2}
  \zeta(r) &= \frac1{8H^2} \left(\frac{r }{1-r^2}-\tanh ^{-1}r\right).
\end{align}
\end{itemize}
\end{fact}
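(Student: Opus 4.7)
The plan is to exploit the one-parameter symmetry to reduce the CMC equation to an ODE and then integrate. For each of the three axis types, I would first write down the one-parameter subgroup of Lorentz motions fixing $\ell$ pointwise: ordinary Euclidean rotations in the $(x_1,x_2)$-plane when $\ell$ is timelike, hyperbolic rotations (Lorentz boosts) when $\ell$ is spacelike, and parabolic null rotations when $\ell$ is lightlike. A generalized spacelike CMC-$H$ surface invariant under such a group is determined by a profile curve transverse to the orbits, which I parametrize by the distance $r$ to $\ell$ in cases (I) and (II), and by an analogous coordinate in case (III).

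Next, I would compute the first and second fundamental forms in these adapted coordinates and impose $H = {\rm const}$. The symmetry reduces the CMC equation to a single second-order ODE for the profile, which by a standard Noether-type argument admits a first integral. In case (I), writing the surface as $X(r,t)=(x_0(r),\,r\cos(2Ht),\,r\sin(2Ht))$ and integrating once yields $x_0'(r)=(r^2+k-1)/\sqrt{\delta(r)}$ with $\delta(r)$ a quartic polynomial and $k$ the conserved flux constant, matching \eqref{eq:t-Delaunay}--\eqref{eq:t-Delaunay2}. Case (II) proceeds identically with hyperbolic rotations replacing circular ones; the sign changes in $\delta(r)$ reflect the change of signature of the orbit metric, and after a permutation of coordinates one arrives at \eqref{eq:s-Delaunay}--\eqref{eq:s-Delaunay2}.

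Case (III) is the most delicate because the null rotation lies in the unipotent part of the Lorentz group, so the orbits have no intrinsic length scale and the distance to $\ell$ degenerates. I would work in null coordinates $u=x_0-x_1$, $v=x_0+x_1$ in which the null rotation by $t$ acts as $(u,v,x_2)\mapsto(u,\,v-2tx_2-t^2 u,\,x_2+tu)$. An invariant spacelike surface then has the form $X(r,t)=\Phi_t(X(r,0))$ with the profile curve parametrized by $r$ as in \eqref{eq:l-Delaunay}. The reduced ODE for the remaining profile function $\zeta(r)$ turns out to be separable and integrates explicitly to the two solutions \eqref{eq:l-Delaunay-1}, \eqref{eq:l-Delaunay-2}, the two branches being distinguished by the sign of a discriminant $1\pm r^2$.

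Finally, I would determine when the singular set is non-empty by invoking Fact \ref{fact:geneCMC-sing}: singular points occur where $|g|=1$ or $\omega=0$, and in the rotationally symmetric setting both conditions depend only on $r$, so $S(X)$ is a union of whole orbits $\{r=r_0\}$. These orbits correspond exactly to the zeros of $\delta$ in cases (I), (II), and to the analogous zeros in case (III). The condition $k\neq 1$ excludes the degenerate sub-family for which $\delta$ becomes a perfect square and the surface reduces to a piece of a hyperbolic plane with empty singular set. I expect the main obstacle to be case (III): setting up the null coordinates so that the symmetry is manifest, and then verifying that every singular spacelike Delaunay surface with lightlike axis is congruent to one of the two normal forms \eqref{eq:l-Delaunay-1}--\eqref{eq:l-Delaunay-2} rather than a continuous family, which requires carefully absorbing all remaining integration constants into ambient Lorentz motions of $L^3$.
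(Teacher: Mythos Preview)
The paper does not actually prove this statement: Fact~\ref{fact:Delaunay} is presented as a known classification result, with the sentence ``Spacelike Delaunay surfaces are classified in \cite{HanoNomizu}, \cite{IshiharaHara}, \cite{Sasahara} (see also \cite{H_Delaunay})'' and no further argument. So there is no proof in the paper to compare against; the statement is simply quoted from the literature.

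That said, your sketch is the standard symmetry-reduction approach that those references use, and it is essentially correct in outline. Two small points are worth flagging. First, in case~(III) the paper's normal form \eqref{eq:l-Delaunay} has the lightlike axis in the direction $(1,0,1)$, so the null coordinates adapted to the parabolic rotation should be $x_0\pm x_2$ rather than $x_0\pm x_1$; this is only a labeling issue but would cause confusion when checking the formulas. Second, your explanation of the condition $k\neq 1$ is not quite right: at $k=1$ one has $\delta(r)=(r^2+2)^2-4=r^2(r^2+4)$, which is not a perfect square, and the resulting surface is (a piece of) a totally umbilic de Sitter-type hypersurface rather than a flat hyperbolic plane; the relevant feature is that the singular set is then empty, which is what the hypothesis of the Fact excludes. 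Neither point is a genuine gap in the strategy, but you would want to correct them before writing the argument out in full.
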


\begin{remark}
\label{rem:conelike}
For any spacelike Delaunay surface,
its singular point set consists of conelike singularities \cite{H_Delaunay}.
\begin{figure}[htb]
\begin{center}
 \begin{tabular}{{c@{\hspace{8mm}}c@{\hspace{8mm}}c}}
  \resizebox{2.8cm}{!}{\includegraphics{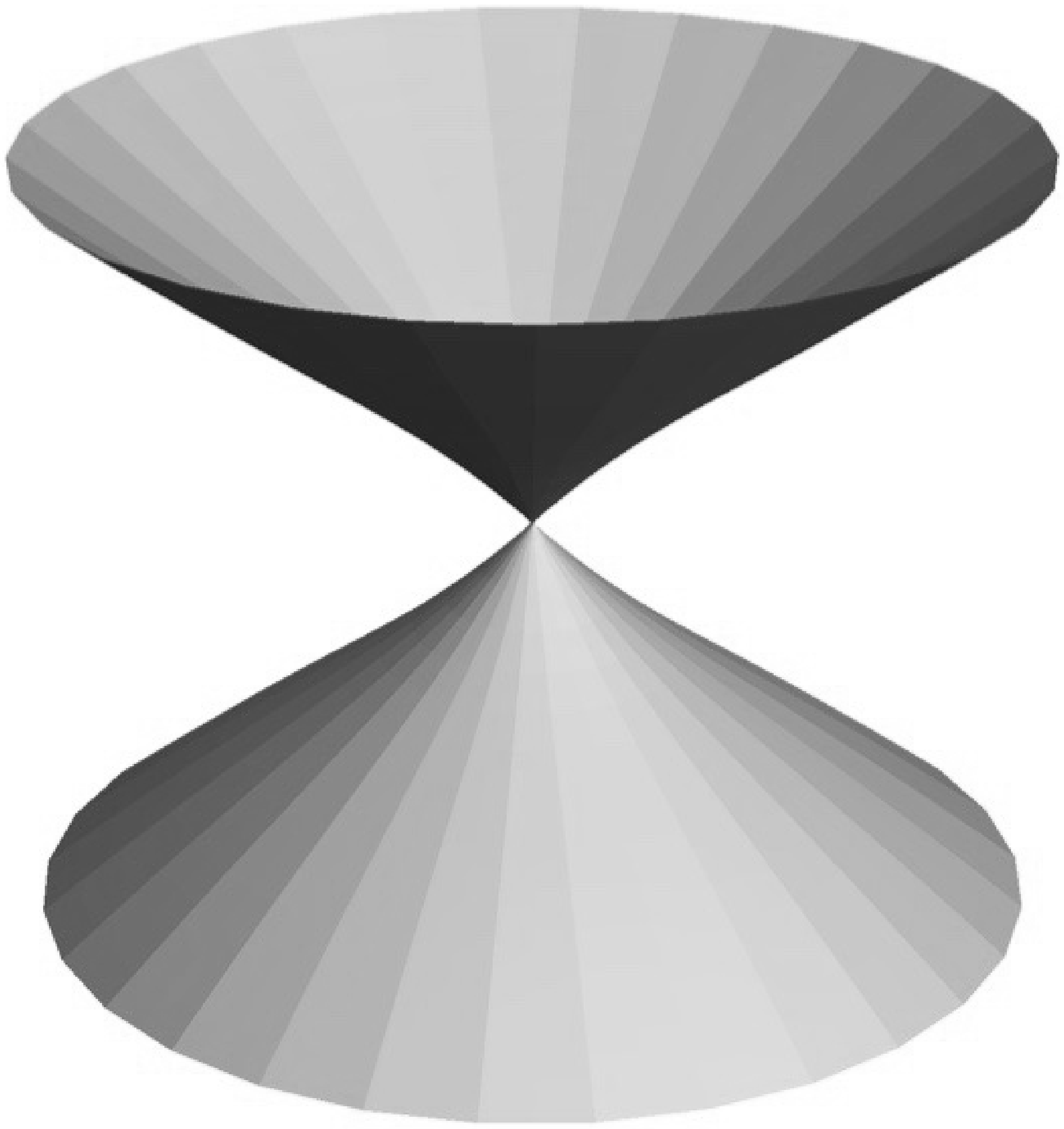}}&
  \resizebox{3.2cm}{!}{\includegraphics{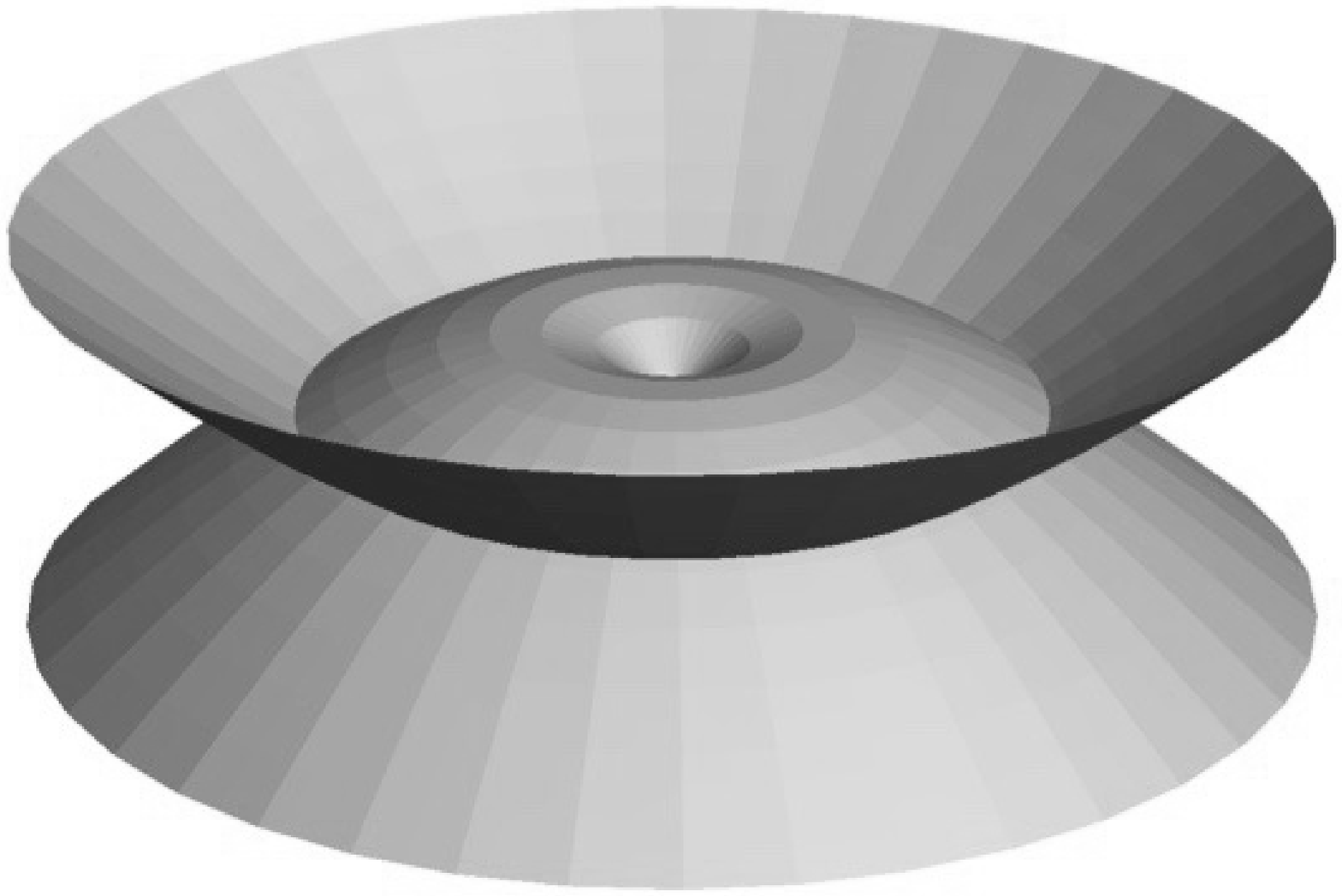}}&
  \resizebox{3.2cm}{!}{\includegraphics{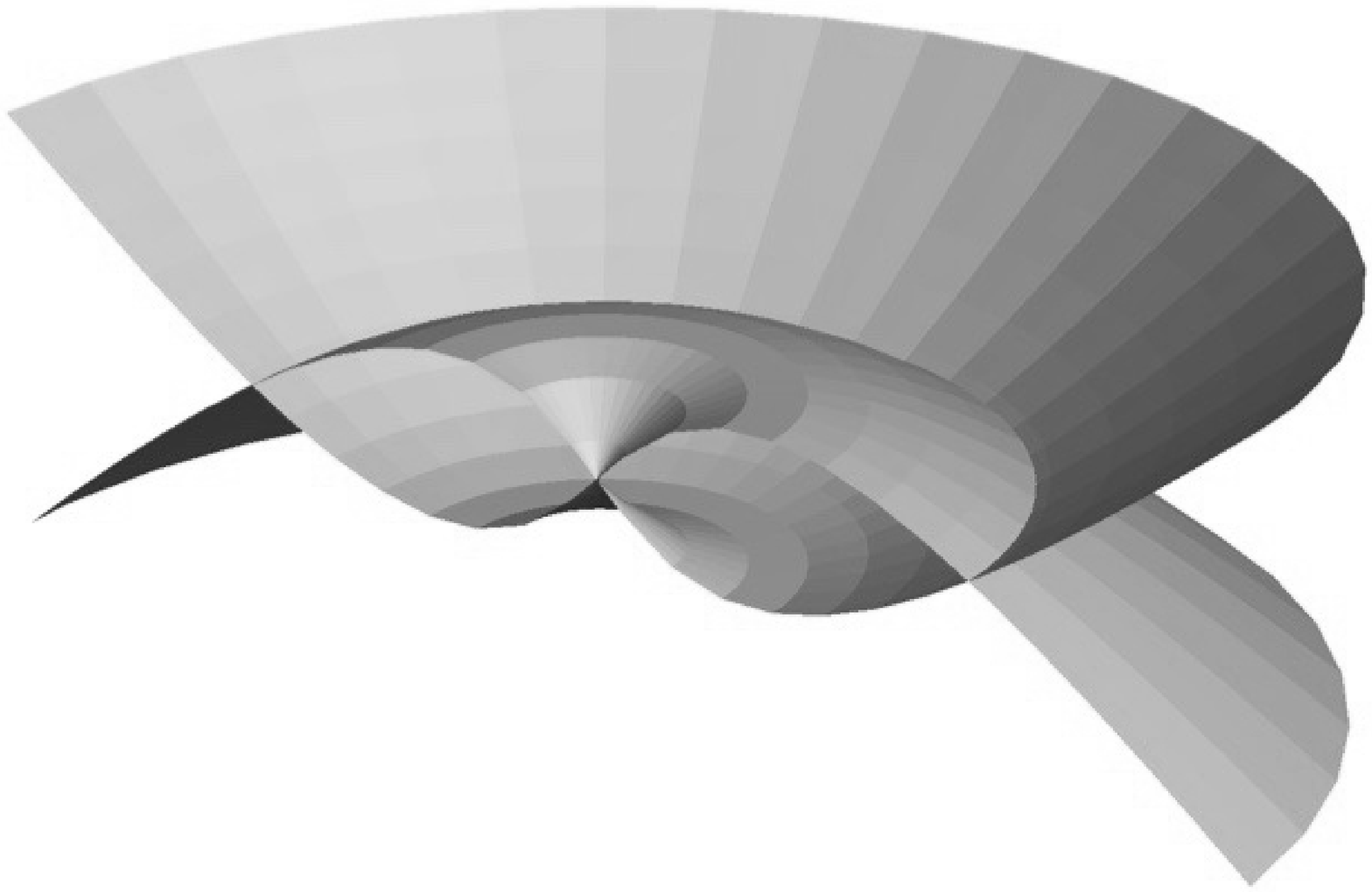}} \\
  {\footnotesize $k=2$ } &
  {\footnotesize $k=0.5$ }&
  {\footnotesize $k=0.5$ (half) }
 \end{tabular}
 \caption{
    Spacelike Delaunay surfaces $X(r,t)$ 
    with timelike axis (cf.\ Fact \ref{fact:Delaunay} (I)). 
    If $k<1$, $X$ has self-intersection.
 }
\label{fig:T-Delaunay}
\end{center}
\end{figure}
\end{remark}

The associate families, in particular, the conjugates of spacelike Delaunay surfaces 
are classified in \cite{H_Delaunay} (see also \cite{Sasahara}).
Set $X_{{T}}(r, t), \, X_{{S}}(r, t),\, X_{{L}}(r, t)$ as 
\begin{align*}
  X_{{T}}(r, t) &= (\lambda+h\, \phi,\, \rho \cos \phi,\, \rho \sin \phi),\\
  X_{{S}}(r,t) &= (\rho \sinh \phi,\, \rho \cosh \phi,\, \lambda+h\, \phi),\\
  X_{{L}}(r, t) &= (\lambda-\rho-\rho \phi^2,\, -2\rho\phi,\, \lambda+\rho-\rho \phi^2)
  +h \left(\frac{\phi^3}{3}+\phi,\, \phi^2,\, \frac{\phi^3}{3}-\phi\right),
\end{align*}
where $h$ is a constant and
$\rho=\rho(r), \, \lambda=\lambda(r)$ 
(resp.\ $\phi=\phi(r,t)$)
are smooth functions of $r$ (resp.\ $(r,t)$).

\begin{fact}[The case of timelike axis]
\label{fact:conjugate-T}
Let $X$ be a spacelike Delaunay surface
whose axis is timelike given by \eqref{eq:t-Delaunay}
and $X^\#$ be its conjugate.
If
\begin{itemize}
\item[$\text{(I-i)}$]
$k>-1$ $(\text{resp.}\ k<-1)$, then $X^\#$ is congruent to
$X_{{T}}(r, t)$ $(\text{resp.}\ X_{{S}}(r, t))$,
where $h= (1-k)/(2H |1+k| )$, and
\begin{equation}\label{eq:t-conj-tDel}
\begin{split}
  &\rho(r) = \frac{\sqrt{\Delta (r)}}{2 H |k+1|},\qquad
  \lambda(r) = 
  \int_{0}^r \frac{\sqrt{ 2|1+k| }\, \tau^4}{H \sqrt{\delta (\tau)} \Delta (\tau)}d\tau,\\
  &\phi(r,t) = 
   {\rm sgn}(k+1)\int_{0}^r  \frac{\sqrt{ 2|1+k| } (1-k) \tau^2}{\sqrt{\delta (\tau)} \Delta (\tau)}  d\tau
  - \sqrt{\frac{|1+k|}{2}}t.
\end{split}
\end{equation}
\item[$\text{(I-ii)}$]
$k=-1$, then $X^\#$ is congruent to $X_{{L}}(r, t)$,
where $h= H$, and $\rho(r) = r/2$,
\begin{equation}\label{eq:l-conj-tDel}
  \lambda(r) = \int_{0}^r \frac{\tau^2(\sqrt{\tau^4+4}+\tau^2)}{4 H^2 \sqrt{\tau^4+4}} d\tau,\qquad
  \phi(r,t) = \int_{0}^r \frac{\sqrt{\tau^4+4}+\tau^2}{2H \sqrt{\tau^4+4}} d\tau + t.
\end{equation}
\end{itemize}
Here, we put $\Delta(r)= 2 (k+1) r^2+(1-k)^2$ and 
$\delta(r)$ is a function given by \eqref{eq:t-Delaunay2}.
\end{fact}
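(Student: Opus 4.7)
The plan is to compute the conjugate $X^\#$ of the Delaunay surface \eqref{eq:t-Delaunay} directly from the Akutagawa--Nishikawa formula \eqref{eq:AN-rep} and bring the resulting parametrization into one of the normal forms $X_T$, $X_S$ or $X_L$. The key observation is that, since $Q = -\omega\,dg$, the associate family acts on the Kenmotsu data by $\omega \mapsto e^{i\theta}\omega$ with $g$ unchanged; hence the conjugate is obtained by the substitution $\omega \mapsto i\omega$ in \eqref{eq:AN-rep}, which is the same as replacing $\Re$ by $-\Im$.

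The first step is to exploit the rotational symmetry of $X$ around its timelike axis. In a conformal parameter $z$ in which $t$ is an angular variable and $r$ is a profile variable, rotations around a timelike axis act on $H^2$ as standard rotations, and via the stereographic projection \eqref{eq:stereo-proj} they become Euclidean rotations of $\hat\C$. This forces the Gauss map to have the form $g(z) = \mu(r)\,e^{i(2Ht+c)}$ for some profile $\mu(r)$ and constant $c$. From the induced metric read off \eqref{eq:t-Delaunay}, the identity $ds^2 = (1-|g|^2)^2|\omega|^2$, and the harmonicity equation for $g$, the unknowns $\mu(r)$ and $\hat\omega(z)$ are pinned down uniquely. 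One checks that $|1-g^2|^2$ equals (a constant multiple of) $\Delta(r) = 2(k+1)r^2+(1-k)^2$, explaining the appearance of $\sqrt{\Delta}$ in \eqref{eq:t-conj-tDel}.

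The second step is to substitute $g$ and $i\omega$ into the Kenmotsu integrand $(-2g,\,1+g^2,\,i(1-g^2))\,i\omega$, expand into real and imaginary parts of $g$, and separate the $r$-dependent and $t$-dependent pieces. When $k+1>0$ the $t$-dependence is trigonometric and produces $X_T$; when $k+1<0$ the Euclidean rotation of $\hat\C$ coming from the timelike rotation is replaced by a hyperbolic one, yielding $X_S$. In both cases the scalars $h$, $\rho(r)$ and $\lambda(r)$ can be read off by inspection, and a single Lorentz motion in $L^3$ normalizes the axis, which establishes (I-i). For (I-ii), where $k=-1$, the coefficient $2(k+1)$ of $r^2$ in $\Delta$ vanishes and the rotation $g \mapsto e^{i\cdot}g$ degenerates: one must redo the ansatz for $g$ adapted to a lightlike rotation, for which a polynomial (cubic) dependence on $\phi$ replaces the trigonometric/hyperbolic one, matching the $X_L$ normal form.

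The main obstacle is bookkeeping. The integrals themselves are elementary once $g$ and $\omega$ are in hand, but tracking signs, matching the integrands of \eqref{eq:t-conj-tDel} and \eqref{eq:l-conj-tDel} exactly, fixing the remaining Lorentz motion that puts the axis into the stated position, and especially carrying out the degenerate limit $k\to -1$ cleanly (so that the lightlike normal form $X_L$ emerges) all require careful algebra. I would verify case (I-ii) by a separate direct calculation rather than by a limit from (I-i), since the underlying one-parameter symmetry group changes its causal character.
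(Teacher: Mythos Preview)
This statement is presented in the paper as a \emph{Fact} quoted from the companion article \cite{H_Delaunay} (see also \cite{Sasahara}); no proof is given here. So there is no in-paper argument to compare against, only the classification result that you would have to reproduce.

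Your proposed strategy, however, contains a genuine error at its ``key observation''. The substitution $\omega\mapsto e^{i\theta}\omega$ with $g$ fixed does \emph{not} produce the associate family in the Kenmotsu--Akutagawa--Nishikawa setting. In that representation $\omega$ is not a free datum: it is determined by $g$ through $\hat\omega=\bar g_z/(1-|g|^2)^2$. More concretely, the integrand $\Psi=(-2g,1+g^2,i(1-g^2))\hat\omega$ is \emph{not} holomorphic when $H\neq0$; indeed $\Psi_{\bar z}=H\,X_{z\bar z}=-\tfrac{H^2}{2}e^{2\sigma}\nu$ by \eqref{eq:MC-Laplacian}, which is real and nonzero. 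Closedness of $\Re(\Psi\,dz)$ is exactly the condition that $\Psi_{\bar z}$ be real, whereas closedness of $\Im(\Psi\,dz)$ would require $\Psi_{\bar z}$ to be purely imaginary. Hence $-\tfrac{2}{H}\Im\int\Psi\,dz$ is path-dependent and does not define a map at all. The ``replace $\Re$ by $-\Im$'' trick is special to the maximal case, where the Weierstrass integrand is holomorphic; transplanting it to non-zero $H$ fails. Equivalently, the Gauss map of $X^\#$ is \emph{not} equal to $g$: if it were, the normal direction and the induced metric would both coincide with those of $X$, forcing $X^\#$ to be congruent to $X$.

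A workable route is to abandon the Kenmotsu data and argue with the intrinsic triple $(ds^2,H,Q)$ instead. For the Delaunay profile \eqref{eq:t-Delaunay} one has, in the conformal parameter adapted to the revolution, $e^{2\sigma}$ and $q$ depending only on the profile variable with $q$ real; the conjugate then has Hopf differential $iq$, and one integrates the moving-frame equations directly. The resulting one-parameter isometry group of $X^\#$ acts as a screw motion in $L^3$, whose causal type (timelike, spacelike, or lightlike axis) is governed by the sign of $k+1$, yielding the three normal forms $X_T$, $X_S$, $X_L$. This is the computation carried out in \cite{H_Delaunay}.
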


\begin{fact}[The case of spacelike axis]
\label{fact:conjugate-S}
Let $X$ be a spacelike Delaunay surface
whose axis is spacelike given by \eqref{eq:s-Delaunay}
and $X^\#$ be its conjugate.
If
\begin{itemize}
\item[$\text{(II-i)}$]
$k>-1$ $(\text{resp.}\ k<-1)$, then $X^\#$ is congruent to
$X_{{S}}(r, t)$ $(\text{resp.}\ X_{{T}}(r, t))$,
where $h= (1-k)/(2H |1+k| )$, and
\begin{equation}\label{eq:t-conj-sDel}
\begin{split}
  &\rho(r) = \frac{\sqrt{\Delta (r)}}{2 H |k+1|},\qquad
  \lambda(r) = 
  -{\rm sgn}(k+1) \int_{0}^r \frac{\sqrt{ 2|1+k| }\, \tau^4}{H \sqrt{\delta (\tau)} \Delta (\tau)}d\tau,\\
  &\phi(r,t) = 
   \int_{0}^r  \frac{\sqrt{ 2|1+k| } (1-k) \tau^2}{\sqrt{\delta (\tau)} \Delta (\tau)}  d\tau
  -{\rm sgn}(k+1) \sqrt{\frac{|1+k|}{2}}t.
\end{split}
\end{equation}
\item[$\text{(II-ii)}$]
$k=-1$, then $X^\#$ is congruent to $X_{{L}}(r, t)$,
where $h= H$, and $\rho(r) = r/2$,
\begin{equation}\label{eq:l-conj-sDel}
  \lambda(r) = \int_{0}^r \frac{\tau^2(\sqrt{\tau^4+4}+\tau^2)}{4 H^2 \sqrt{\tau^4+4}} d\tau,\qquad
  \phi(r,t) = \int_{0}^r \frac{\sqrt{\tau^4+4}+\tau^2}{2H \sqrt{\tau^4+4}} d\tau + t.
\end{equation}
\end{itemize}
Here, we put $\Delta(r)= -2 (k+1) r^2+(1-k)^2$ and 
$\delta(r)$ is a function given by \eqref{eq:s-Delaunay2}.
\end{fact}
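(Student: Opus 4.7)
The plan is to derive the conjugate of \eqref{eq:s-Delaunay} directly from the Akutagawa-Nishikawa representation formula \eqref{eq:AN-rep}. First I would introduce conformal coordinates $(u,v)$ on the regular part of the Delaunay surface in which the hyperbolic rotation symmetry $(r,t)\mapsto(r,t+c)$ acts by translation in $v$, so the first fundamental form becomes $ds^2=e^{2\sigma(u)}(du^2+dv^2)$, with $r=r(u)$ an explicit primitive determined by equating this to the induced metric of \eqref{eq:s-Delaunay}. In these coordinates I would compute the Lorentzian unit normal of \eqref{eq:s-Delaunay}, apply the stereographic projection \eqref{eq:stereo-proj} to obtain the Gauss map $g=g(u,v)$, and then extract the one-form $\omega=\hat\omega\,dz$ from \eqref{eq:omega}. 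Because of the hyperbolic symmetry, $g$ and $\hat\omega$ will depend on $z=u+iv$ in a simple way, essentially through $u$ alone up to a phase in $v$.

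Next, I would produce the conjugate by replacing $\omega$ with $i\omega$ in the integrand of \eqref{eq:AN-rep}, which swaps the real and imaginary parts of the integrand up to signs. A direct computation shows the resulting surface still has an ignorable coordinate coming from the translation symmetry in $v$, so the conjugate inherits a one-parameter group of motions of $L^3$ fixing a line; this line is the axis of the conjugate Delaunay-type surface. Whether that axis is spacelike, timelike or lightlike is governed by the sign of $\Delta(r)=-2(k+1)r^2+(1-k)^2$: for $k>-1$ the orbits of the one-parameter group are spacelike and the conjugate fits the ansatz $X_{S}(r,t)$; for $k<-1$ the orbits are timelike and it fits $X_{T}(r,t)$; and for $k=-1$ the orbits degenerate to null translations, yielding $X_{L}(r,t)$.

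The final step is to match $\rho(r)$, $\lambda(r)$, and $\phi(r,t)$ with the integrals in \eqref{eq:t-conj-sDel} or \eqref{eq:l-conj-sDel}. After inverting the change of variable $r=r(u)$, this reduces to identifying integrands, using the identity $ds^2=(1-|g|^2)^2|\omega|^2$ to express $\hat\omega$ in terms of $r$ and $k$. The main obstacle is the bookkeeping of square roots: the formulas involve both $\sqrt{\delta(r)}$ and $\sqrt{\Delta(r)}$, and one must track $\operatorname{sgn}(k+1)$ carefully to obtain the correct signs of $\lambda$ and $\phi$ in \eqref{eq:t-conj-sDel}, which differ from those in \eqref{eq:t-conj-tDel} for the timelike-axis case of Fact \ref{fact:conjugate-T}. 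The structural parallel with that earlier Fact is useful as a sanity check, since the spacelike-axis and timelike-axis Delaunay surfaces share essentially the same underlying Gauss map up to a change of role between the axis direction and one of the rotation directions; the sign reversal in the coefficient of $r^2$ in $\Delta(r)$ is the only algebraic manifestation of this exchange, and it is precisely what interchanges $X_{T}$ and $X_{S}$ in the dichotomy $k\gtrless -1$ between the two facts.
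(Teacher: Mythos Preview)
The paper does not prove this statement at all: it is presented as a \emph{Fact} quoted from \cite{H_Delaunay} (with \cite{Sasahara} as a secondary reference), alongside the parallel Facts~\ref{fact:conjugate-T} and~\ref{fact:conjugate-L}. There is therefore no in-paper argument to compare your proposal against; the authors simply import the classification of conjugates of spacelike Delaunay surfaces from the first author's companion paper.

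As for your outline itself, the overall architecture---pass to conformal coordinates adapted to the hyperbolic-rotation symmetry, exploit that the data depend essentially on $u$ alone, identify the axis type of the resulting helicoidal surface, and match the profile functions---is the natural route and is essentially how such classifications are carried out. One step, however, is not justified as written: in the Kenmotsu-type formula \eqref{eq:AN-rep} the one-form $\omega$ is \emph{determined} by the harmonic Gauss map $g$ via \eqref{eq:omega}, so ``replacing $\omega$ by $i\omega$'' is not a legal move the way it is for holomorphic Weierstrass data of maximal surfaces. The conjugate $X^{\#}$ is defined by the triplet $(ds^2,H,iQ)$, and to realize it through \eqref{eq:AN-rep} you need the harmonic Gauss map of $X^{\#}$, which is genuinely different from that of $X$. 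In practice one either invokes the fundamental theorem directly on $(ds^2,H,e^{i\theta}Q)$ and integrates the resulting frame equations (this is what underlies the derivation in \cite{H_Delaunay}), or one uses a DPW/loop-group description in which the associate family corresponds to a spectral-parameter rotation. Either way, the ``$\omega\mapsto i\omega$'' shortcut needs to be replaced by one of these before the subsequent matching of $\rho$, $\lambda$, $\phi$ can be carried out.
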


\begin{fact}[The case of lightlike axis]
\label{fact:conjugate-L}
Let $X$ be a spacelike Delaunay surface
whose axis is lightlike given by \eqref{eq:l-Delaunay}
and $X^\#$ be its conjugate.
If $\zeta(r)$ is given by
\begin{itemize}
\item[$\text{(III-i)}$]
\eqref{eq:l-Delaunay-1}, then $X^\#$ is congruent to
$X_{{T}}(r, t)$, where $h= -1/(2H)$ and
\begin{equation}\label{eq:conj-lDel-1}
\begin{split}
  &\rho(r) = \frac{\sqrt{2 r^2+1}}{2 H},\quad
  \lambda(r) = 
  \frac{-\sqrt{2} r+2 \sqrt{2} \tan ^{-1}r-\tan ^{-1}\sqrt{2} r}{2 H},\\
  &\phi(r,t) = 
  \frac{2 H t}{\sqrt{2}}+\sqrt{2} \tan ^{-1}r-\tan ^{-1}\sqrt{2} r.
\end{split}
\end{equation}
\item[$\text{(III-ii)}$]
\eqref{eq:l-Delaunay-2}, then $X^\#$ is congruent to
$X_{{S}}(r, t)$, where $h= 1/(2H)$ and
\begin{equation}\label{eq:conj-lDel-2}
\begin{split}
  &\rho(r) = \frac{\sqrt{1-2 r^2}}{2 H},\quad
  \lambda(r) = 
  \frac{\sqrt{2} r-2 \sqrt{2} \tanh ^{-1}r+\tanh ^{-1}\sqrt{2} r}{2 H},\\
  &\phi(r,t) = 
  \frac{2 H t}{\sqrt{2}}+\sqrt{2} \tanh ^{-1}r-\tanh ^{-1}\sqrt{2} r.
\end{split}
\end{equation}
\end{itemize}
\end{fact}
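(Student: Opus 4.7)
The plan is to apply the Akutagawa--Nishikawa representation formula \eqref{eq:AN-rep} to the surface $X$ and then pass to its conjugate via the substitution $\omega\mapsto i\omega$.

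First, I would find the Gauss map $g$ and the $1$-form $\omega$ corresponding to $X$ given by \eqref{eq:l-Delaunay} with $\zeta$ as in \eqref{eq:l-Delaunay-1} or \eqref{eq:l-Delaunay-2}. Invariance of $X$ under the null rotation along its lightlike axis forces the unit normal $\nu$, and hence $g=\pi\circ\nu$, to have a matching symmetry: $|g|$ should be a function of $r$ alone, while $t$ enters through a null-rotation action on $\hat\C$. The form $\omega$ is then pinned down by the identity $ds^2=(1-|g|^2)^2|\omega|^2$ and verified by matching the Akutagawa--Nishikawa integrand against $dX$ componentwise; the harmonicity equation \eqref{eq:HME} provides a consistency check.

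Second, since the associate family transforms the Hopf differential as $Q\mapsto e^{i\theta}Q=-e^{i\theta}\omega\,dg$, passing to the conjugate ($\theta=\pi/2$) amounts to replacing $\omega$ by $i\omega$. Hence
\[
X^{\#}(z) \;=\; -\frac{2}{H}\,\Im\int_{z_0}^{z}\bigl(-2g,\ 1+g^2,\ i(1-g^2)\bigr)\,\omega.
\]
Substituting the explicit $(g,\omega)$ from the first step and integrating yields a parameterization which, after an ambient isometry of $L^3$, I expect to coincide with $X_T$ in case (III-i) and with $X_S$ in case (III-ii). To decide which template applies, I would examine the Killing vector field $\partial_t X^{\#}$: the compact angular symmetry encoded by the $\tan^{-1}$ term in \eqref{eq:l-Delaunay-1} should become a timelike rotation after conjugation, while the non-compact symmetry encoded by $\tanh^{-1}$ in \eqref{eq:l-Delaunay-2} should become a Lorentz boost.

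The main obstacle will be producing the exact antiderivatives: the specific combinations $\sqrt{2}\tan^{-1}r-\tan^{-1}\sqrt{2}r$ and $\sqrt{2}\tanh^{-1}r-\tanh^{-1}\sqrt{2}r$ displayed in \eqref{eq:conj-lDel-1} and \eqref{eq:conj-lDel-2}, together with the radii $\sqrt{1\pm 2r^2}/(2H)$ in $\rho(r)$, and fixing the ambient isometry that brings $X^{\#}$ into the stated normal form. The factor $\sqrt{2}$ should emerge as the lightlike-axis analogue of $\sqrt{|1+k|}$ in \eqref{eq:t-conj-tDel} and \eqref{eq:t-conj-sDel}, which suggests that Fact \ref{fact:conjugate-L} could alternatively be derived by a careful limiting argument from Facts \ref{fact:conjugate-T} and \ref{fact:conjugate-S} as the causal character of the axis of the original Delaunay surface degenerates; care is required, however, since the $k=-1$ limit in (I-ii) and (II-ii) produces a conjugate in the $X_L$ family rather than in $X_T$ or $X_S$, reflecting that the ``axis type'' of the conjugate depends on the choice of $\zeta$ in \eqref{eq:l-Delaunay}, not on a parameter $k$.
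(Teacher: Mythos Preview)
The paper does not prove this statement: it is recorded as a \emph{Fact} and attributed to \cite{H_Delaunay} (see also \cite{Sasahara}), with no argument given in the present paper. So there is no ``paper's own proof'' against which to compare your proposal.

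That said, your strategy is a reasonable route to such a result. Computing $(g,\omega)$ for the lightlike-axis Delaunay surface and then passing to the conjugate via $\omega\mapsto i\omega$ in the Akutagawa--Nishikawa formula is exactly how the associate family is built, and your heuristic for distinguishing the $X_T$ versus $X_S$ template by the causal type of $\partial_t X^{\#}$ is sound. Two cautions: first, what you have written is a plan rather than a proof --- the actual content lies in the explicit integrations and the identification of the ambient isometry, neither of which you carry out; second, your suggested ``limiting argument'' from Facts~\ref{fact:conjugate-T} and \ref{fact:conjugate-S} is not straightforward, as you yourself note, since the lightlike-axis case is not parametrized by $k$ and the $k\to -1$ limits in those facts land in the $X_L$ family. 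If you pursue this, the direct computation via \eqref{eq:AN-rep} is the cleaner path.
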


\begin{proof}[Proof of Theorem \ref{thm:2-5}]
Let $X$ be a spacelike Delaunay surface with conelike singular points
and $X^{\#}$ be its conjugate.
First, we consider the case that
$X$ is given by \eqref{eq:t-Delaunay} and $k>-1$.
Then, by Fact \ref{fact:conjugate-T},
$X^{\#}$ is congruent to $X_{{T}}(r, t)$.
The Euclidean unit normal $\vect{n}$ of $X^{\#}(r,t)$ is given by
\begin{align*}
  \vect{n}(r,t)=
  &\frac{1}{\sqrt{2} \sqrt{\Delta (r)} \sqrt{\delta (r)-(k+1) r^2}}
  \Bigl(
  \sqrt{\delta (r)} \sqrt{\Delta (r)},\\
  &\hspace{3mm}-\sqrt{2} \sqrt{k+1} r^3 \cos \phi (r,t) - (k-1) \sqrt{\delta (r)} \sin \phi (r,t),\\
  &\hspace{8mm}-\sqrt{2} \sqrt{k+1} r^3 \sin \phi (r,t) + (k-1) \sqrt{\delta (r)} \cos \phi (r,t)
  \Bigr),
\end{align*}
where $\Delta(r)$, $\phi(r,t)$ are defined as in Fact \ref{fact:conjugate-T}
and $\delta(r)$ is given by \eqref{eq:t-Delaunay2}.
The signed area density function $\lambda$ is calculated as
\[
  \lambda(r,t)=\det\left((X^{\#})_r,\, (X^{\#})_t,\, \vect{n}\right)
  =\frac{r \sqrt{\delta (r)-(k+1) r^2}}{H \sqrt{k+1} \sqrt{\delta (r)}},
\]
which implies that
the singular point set $S(X^{\#})$ of $X^{\#}(r,t)$ is $S(X^{\#})=\{(r,t)\,;\,r=0\}$.
Since $d\lambda=dr/(H \sqrt{k+1})$ holds on $S(X^{\#})$,
all the singular points of $X^{\#}(r,t)$ are non-degenerate.
Moreover, since the singular curve $\gamma(t)$ and 
the null vector field $\eta(t)$ along $\gamma(t)$ 
are given by $\gamma(t)=(0,t)$, $\eta(t)=\partial_r$, respectively,
all the singular points of $X^{\#}(r,t)$ are of the first kind.
The extensions of $\gamma'$ and $\eta$ are
given by $\xi=\partial_t$ and $\eta=\partial_r$, respectively.
Then we have 
\[
  \det \left(\xi X^{\#},\, \eta\eta X^{\#},\, \eta\eta\eta X^{\#} \right)(\gamma(t))
  =\det \left( (X^{\#})_t,\, (X^{\#})_{rr},\, (X^{\#})_{rrr} \right)(0,t)
  =0
\]
for each $t$, 
and hence the condition \eqref{eq:condition-3} holds.
By Lemma \ref{lem:special-null-vf},
the null vector field $\tilde\eta$ satisfying \eqref{eq:special-null}
is calculated as
\[
  \tilde\eta=\partial_r+\frac{r^2}{H (k-1) |k-1|}\partial_t.
\]
Since $\tilde\eta^3 X^{\#}(0,t)=0$, 
the constant $C$ as in \eqref{eq:constant-C} is $0$.
Then we have 
\[
  \det \left(\xi X^{\#},\, \tilde\eta^2 X^{\#},\, \tilde\eta^5 X^{\#} \right)(\gamma(t))
  =-\frac{72}{H^2 |k-1|^3}\neq0
\]
for each $t$, and hence the condition \eqref{eq:condition-4} holds.
Therefore, Theorem \ref{thm:criterion} yields that 
all the singular points of $X^{\#}(r,t)$ are $(2,5)$-cuspidal edges.
In the case that
$X$ is given by \eqref{eq:s-Delaunay}, \eqref{eq:l-Delaunay}
or \eqref{eq:t-Delaunay} with $k\leq -1$, 
we can prove the desired result in a similar way.
\end{proof}

\begin{remark}
\label{rem:maxface_25}
We should remark that maxfaces do not admit any $(2,5)$-cuspidal edges.
In fact, if we assume that a maxface admit a $(2,5)$-cuspidal edge $p$,
then by \cite[Lemma 3.3]{UY_maxface} and \cite[Lemma 2.17]{FKKRSUYY_Okayama}, 
one can show that $p$ satisfies the condition of folds, which is a contradiction
(see also \cite[Definition 2.13]{FKKRSUYY_Okayama}).
Therefore, 
the singularity types of generalized spacelike CMC surfaces 
are different from those of maxfaces.
\end{remark}

\appendix

\section{Proof of Lemma \ref{lem:indep}}
\label{app:dependence}

Let $\xi$, $\eta$ be smooth vector fields 
which satisfy the assumptions in Theorem \ref{thm:criterion}.
That is, let $X : U \rightarrow \R^3$ be a frontal,
$p \in U$ a singular point of the first kind,
$\gamma(t)$ $(|t|<\varepsilon)$ 
be a singular curve passing through $p=\gamma(0)$
and
$\xi=\xi(u,v)$ and $\eta=\eta(u,v)$ 
are smooth vector fields on $U$
which are extensions of the singular direction $\gamma'(t)$ 
and the null vector field $\eta(t)$,
respectively.

We shall prove Lemma \ref{lem:indep} 
in the following two steps (Step \ref{step:1}, Step \ref{step:2}).

\begin{step}\label{step:1}
The conditions \eqref{eq:condition-3} and \eqref{eq:condition-4}
in Theorem \ref{thm:criterion}
are independent of choices of vector fields $\xi$, $\eta$.
\end{step}
\begin{proof}
If $\bar\xi$, $\bar\eta$ are also vector fields 
satisfying the assumptions in Theorem \ref{thm:criterion},
they can be expressed as a linear combination
\begin{equation}\label{eq:abcd}
  \bar\xi=a_1(u,v)\xi+a_2(u,v)\eta,\qquad
  \bar\eta=b_1(u,v)\xi+b_2(u,v)\eta,
\end{equation}
where $a_j$, $b_j$ $(j=1,2)$ are smooth functions satisfying
\[
  a_2(u,v)=b_1(u,v)=0 
\]
on the singular point set $S(X)$, and $a_1(u,v)$, $b_2(u,v)$ never vanish on $S(X)$.
Then, it holds that
\[
  \bar\xi X(p)=a_1(p)\xi X(p),\qquad
  \bar\eta\bar\eta X(p)=
  b_2(\eta b_1\xi X+b_2\eta\eta X)(p).
\]

First, we shall prove that the condition 
\eqref{eq:condition-3} is independent of the choice of $\xi$ and $\eta$.
It suffices to show that
$\det(\bar\xi X,\bar\eta\bar\eta X,\bar\eta\bar\eta\bar\eta X)(p)$
is a non-zero constant multiple of
$\det(\xi X,\eta\eta X,\eta\eta\eta X)(p)$.
Since we want to calculate 
$\det(\bar\xi X,\bar\eta\bar\eta X,\bar\eta\bar\eta\bar\eta X)$,
we shall ignore the terms of $\xi X(p)$, $\eta\eta X(p)$
appearing in $\bar\eta\bar\eta\bar\eta X(p)$.
As $\xi \eta-\eta\xi$ tangent to $S(X)$
and the image of $dX$ is spanned by $\xi X$ on $S(X)$, 
$(\xi \eta-\eta\xi) X$ is parallel to $\xi X$ on $S(X)$.
Moreover, since $\eta X=0$ on $S(X)$,
we have that $\xi\eta X=0$ on $S(X)$.
Therefore, we can also ignore $\eta\xi X(p)$.
In this situation, 
$\bar\eta\bar\eta\bar\eta X(p)=(b_2)^3\eta\eta\eta X(p)$ holds, 
and hence we have that 
the condition \eqref{eq:condition-3} is 
independent of choices of vector fields.

With respect to the condition \eqref{eq:condition-4},
let $\xi$, $\eta$ be vector fields
which satisfy the assumptions in Theorem \ref{thm:criterion}.
Moreover, we assume that the condition \eqref{eq:special-null},
that is $\eta\eta\eta X(p)={C} \eta\eta X(p)$ holds.
If $\bar\xi$, $\bar\eta$ are also vector fields
satisfying these assumptions,
they can be expressed as a linear combination
in \eqref{eq:abcd}.
Then, we have 
\[
  \eta b_1(p)=\eta\eta b_1(p)=0.
\]
Under these assumptions, it holds that
\[
  \bar\eta\bar\eta X(p)=(b_2)^2\eta\eta X(p),
  \qquad
  \bar\eta\bar\eta\bar\eta X(p)
  =(b_2)^2(3\eta b_2+Cb_2)(p)\eta\eta X(p).
\]
Hence the constant $\bar{C}$ satisfying
$\bar\eta\bar\eta\bar\eta f(p) =\bar{C} \bar\eta\bar\eta f(p)$
is given by  
$\bar{C}=(3\eta b_2+Cb_2)(p).$
Now we shall show that
$\det(\bar\xi X,\,\bar\eta\bar\eta X,\,3\bar\eta^5X-10\bar{C}\bar\eta^4 X)(p)$
is a non-zero constant multiple of 
$\det(\xi X,\,\eta\eta X,\, 3\eta^5X-10C\eta^4 X)(p).$
As in the argument above,
we shall ignore the terms which are parallel to
$\xi X(p),\eta\eta X(p)$.
Then, we have
\[
  \bar\eta^4 X(p)= (b_2)^4\eta^4 X(p),\qquad
  \bar\eta^5 X(p)=
  (b_2)^4\left\{
  10(\eta b_2)(\eta^4 X)
  +b_2\,\eta^5 X\right\}(p).
\]
Therefore, it holds that 
$
  (3\bar\eta^5X-10\bar{C}\bar\eta^4 X)(p)
  =(b_2)^5(3\eta^5X-10{C}\eta^4X)(p),
$
and hence we have the conclusion.
\end{proof}

\begin{step}\label{step:2}
The conditions \eqref{eq:condition-3} and \eqref{eq:condition-4}
in Theorem \ref{thm:criterion}
are independent of a choice of coordinate systems of $\R^3$.
\end{step}
\begin{proof}
Without loss of generality,
we may assume that $p=(0,0)$ and $X(p)=(0,0,0)$.
Let $\Phi=(\Phi^1,\Phi^2,\Phi^3)$ be a diffeomorphism
such that $\Phi(0,0,0)=(0,0,0)$ and
$(x_1,x_2,x_3)$ a coordinate system of $\R^3$.
In the following, we denote by $(\Phi^k)_{k=1,2,3}$ the point 
$(\Phi^1,\Phi^2,\Phi^3)$.

Since $\eta$ is a null direction, we have
{\allowdisplaybreaks
\begin{align*}
  \xi (\Phi\circ X)
  &= \left(\sum_{i=1}^3\Phi^k_{x_i}\xi X_i\right)_{k=1,2,3}
  =d\Phi(\xi X),
  \\
  \eta (\Phi\circ X)
  &=\left(\sum_{i=1}^3\Phi^k_{x_i}\eta X_i\right)_{k=1,2,3}
  =d\Phi(\eta X),
  \\
  \eta^2 (\Phi\circ X)
  &=\left(\sum_{i=1}^3\Phi^k_{x_i}\eta^2 X_i\right)_{k=1,2,3}
  =d\Phi(\eta^2 X),
  \\
  \eta^3 (\Phi\circ X)
  &=\left(\sum_{i=1}^3\Phi^k_{x_i}\eta^3 X_i\right)_{k=1,2,3}
  =d\Phi(\eta^3 X),
  \\
  \eta^4 (\Phi\circ X)
  &=\left(\sum_{i,j=1}^3 3\Phi^k_{x_ix_j}\eta^2 X_i\eta^2 X_j
     +\sum_{i=1}^3\Phi^k_{x_i}\eta^4 X_i\right)_{k=1,2,3},
  \\
  \eta^5 (\Phi\circ X)
  &=
  \left(\sum_{i,j=1}^3 10\Phi^k_{x_ix_j}\eta^3 X_i\eta^2 X_j
     +\sum_{i=1}^3\Phi^k_{x_i}\eta^5 X_i\right)_{k=1,2,3}.
\end{align*}}
Here, we regard $d\Phi:T\R^3\to T\R^3$
as a $GL(3,\R)$-valued map on $\R^3$.
These yield that the condition \eqref{eq:condition-3}
is independent of a choice of coordinate system of $\R^3$.
Moreover, we have that the condition \eqref{eq:special-null}
is also independent of a choice of coordinate system of $\R^3$.

From now on, we assume that $\tilde\eta$ satisfies the condition \eqref{eq:special-null}.
Since at the origin
\[
  \tilde\eta^3 X=C\tilde\eta^2 X,\qquad
  \tilde\eta^3(\Phi\circ X)=\hat{C}\tilde\eta^2(\Phi\circ X),
\]
hold, we have
$\tilde\eta^2(\Phi\circ X)=d\Phi(\tilde\eta^2 X)$, 
$\tilde\eta^3(\Phi\circ X)=d\Phi(\tilde\eta^3 X)$,
and hence
\[
  \tilde\eta^3(\Phi\circ X) - \hat{C}\tilde\eta^2(\Phi\circ X)
  =d\Phi(\tilde\eta^3 X-\hat{C}\tilde\eta^2 X)
  =d\Phi(C\tilde\eta^2 X-\hat{C}\tilde\eta^2 X)
\]
at the origin, which imply $\hat{C}=C$.
{\allowdisplaybreaks
Now, at the origin, it holds that
\begin{align*}
&3\tilde\eta^5(\Phi\circ X) - 10{C}\tilde\eta^4(\Phi\circ X)\\
&\hspace{10mm}=
     3\Bigg(
        \sum_{i,j=1}^3 10\Phi^k_{x_ix_j}\eta^3 X_i\eta^2 X_j
        +\sum_{i=1}^3\Phi^k_{x_i}\eta^5 X_i
     \Bigg)_{k=1,2,3}\\
&\hspace{30mm}
    -10{C}\Bigg(
       \sum_{i,j=1}^3 3\Phi^k_{x_ix_j}\eta^2 X_i\eta^2 X_j
      +\sum_{i=1}^3\Phi^k_{x_i}\eta^4 X_i
    \Bigg)_{k=1,2,3}\\
&\hspace{10mm}=
   \Bigg(
     30\sum_{i,j=1}^3 \Phi^k_{x_ix_j}\eta^2 X_j
     \big(\eta^3 X_i-{C}\eta^2 X_i\big)
   \Bigg)_{k=1,2,3}\\
&\hspace{50mm}
   +\Bigg(
      \sum_{i=1}^3\Phi^k_{x_i}\big(3\eta^5 X_i
     -10{C}\eta^4 X_i\big)
   \Bigg)_{k=1,2,3}\\
&\hspace{10mm}=d\Phi\big(3\eta^5 X_i-10{C}\eta^4 X_i\big).
\end{align*}}
Thus,
we have the condition \eqref{eq:condition-4}
is independent of a choice of coordinate system of $\R^3$.
\end{proof}

\begin{acknowledgements}
The authors thank Professors Masaaki Umehara and Kotaro Yamada 
for valuable comments.
The first and the second authors are partially supported by 
Grant-in-Aid for Challenging Exploratory Research No. 26610016
of the Japan Society for the Promotion of Science. 
The second author is partially supported 
by Grant-in-Aid for Scientific Research (B) No. 25287012 
and the third author by (C) No. 26400087 
from Japan Society for the Promotion of Science.
\end{acknowledgements}


\end{document}